\documentclass[11pt]{article}

\usepackage[margin=1in]{geometry}
\usepackage{amsmath,amssymb,amsthm}
\usepackage{enumitem}
\usepackage{microtype}
\usepackage[T1]{fontenc}
\usepackage{lmodern}
\usepackage[colorlinks=true,linkcolor=blue,citecolor=blue,urlcolor=blue]{hyperref}
\hypersetup{
  pdftitle={The List Edge-Coloring Conjecture for New Infinite Families},
  pdfauthor={Amir Jafari},
  pdfsubject={Prime congruences for signed one-factorization counts and online list edge coloring},
  pdfkeywords={list edge coloring, edge paintability, one-factorization, Pfaffian, Alon-Tarsi method, Burnside lemma, cyclic starter}
}

\newtheorem{theorem}{Theorem}[section]
\newtheorem{lemma}[theorem]{Lemma}
\newtheorem{proposition}[theorem]{Proposition}
\newtheorem{corollary}[theorem]{Corollary}

\newcommand{\F}{\mathbb F}
\newcommand{\Pf}{\operatorname{Pf}}
\newcommand{\Fix}{\operatorname{Fix}}
\newcommand{\sgn}{\operatorname{sgn}}
\newcommand{\crs}{\operatorname{cr}}

\title{The List Edge-Coloring Conjecture for\\
New Infinite Families}
\author{Amir Jafari\\[4pt]
\small New Uzbekistan University, Tashkent, Uzbekistan\\
\small\texttt{a.jafari@newuu.uz}}
\date{}

\begin{document}
\maketitle

\begin{abstract}
The List Edge-Coloring Conjecture predicts that any graph whose edges
can be colored with $k$ colors can also be colored from arbitrary lists
of $k$ colors.  We prove its stronger online form for two new infinite
families, $K_{p-1}$ and $K_{2p}$, where $p$ is an odd prime.  For even
$n$, order the vertices of $K_n$ and draw each perfect matching as arcs
above them.  Count crossings separately within each matching, and let
$S_n$ be the number of decompositions into perfect matchings having an
even total crossing count minus the number having an odd total.  Then
\[
 S_{p-1}\equiv\left(\frac{-2}{p}\right)\pmod p,
 \qquad
 S_{2p}\equiv-p\pmod {p^2}.
\]
The two congruences are governed by the same elementary matching sum
over $\F_p$, although their proofs use the prime $p$ differently.
Their nonzero residues give the conjectured values even in the online
game.  They also treat the corresponding complete graphs with one
perfect matching removed, as well as $K_{2p}$ after deleting some, but
not all, of a natural cyclic family of $p$ disjoint perfect matchings.
\end{abstract}

\medskip
\noindent\textbf{Keywords.}
List edge-coloring; online list coloring; paintability; complete graph;
Cayley graph; one-factorization; Pfaffian; Alon--Tarsi method;
weighted Burnside lemma.

\smallskip
\noindent\textbf{2020 Mathematics Subject Classification.}
Primary 05C15, 05C70; Secondary 15A15.

\section{Introduction}

For a multigraph \(G\), let \(\chi'_{\ell}(G)\) be the least integer
\(k\) such that, whenever each edge is assigned a list of \(k\) colors,
the edges can be properly colored from their lists.  Let \(\chi'(G)\)
denote the ordinary edge chromatic index.  The List Edge-Coloring Conjecture
asserts that
\[
                         \chi'_{\ell}(G)=\chi'(G)
\]
for every multigraph \(G\).  Galvin proved this for bipartite
multigraphs \cite{Galvin}.  H\"aggkvist and Janssen proved
\(\chi'_{\ell}(K_n)\le n\) \cite{HJ}.  Since \(\chi'(K_n)=n\) for odd
\(n\), this settles the conjecture for complete graphs of odd order.
The unresolved complete graphs therefore have even order.

We prove a stronger online statement.  Give each edge \(k\) tokens.
In each round, Lister marks a nonempty set of uncolored edges and
removes one token from each; Painter then colors a matching of the
marked edges.  After Painter's move, Lister wins if some uncolored edge
has no token left.
Painter wins if every edge is eventually colored.  The least \(k\) for
which Painter has a winning strategy is the edge-paintability index
\(\chi'_{\mathrm P}(G)\).  Always
\[
              \chi'(G)\le \chi'_{\ell}(G)\le\chi'_{\mathrm P}(G).
                                                               \tag{1.1}
\]
When \(n\) is even, the edges of \(K_n\) can be partitioned into
\(n-1\) perfect matchings.  Such a partition is called a
\emph{one-factorization}; it shows that \(\chi'(K_n)=n-1\).  Our main
result is that equality holds throughout (1.1) for \(K_{p-1}\) and
\(K_{2p}\), for every odd prime \(p\).

The proof reduces the coloring problem to a signed enumeration.  Fix
an order of the vertices, draw the edges of a perfect matching \(M\) as
arcs above them, and let \(\crs(M)\) be the number of pairs of arcs whose
endpoints alternate in the vertex order.  Thus the arcs \(\{a,c\}\) and
\(\{b,d\}\) cross when \(a<b<c<d\).  Put
\[
                         \varepsilon(M)=(-1)^{\crs(M)}.
\]
If \(\mathcal F\) is a one-factorization, put
\[
              \varepsilon(\mathcal F)
                 =\prod_{M\in\mathcal F}\varepsilon(M).
\]
Although the sign of one matching may change when the vertices are
reordered, the product \(\varepsilon(\mathcal F)\) does not.  For even
\(n\), define
\[
                         S_n=\sum_{\mathcal F}\varepsilon(\mathcal F),
\]
where the sum is over all one-factorizations of \(K_n\); the factors
themselves are not ordered.  Thus \(S_n\) is simply the number of
positive factorizations minus the number of negative ones.

For \(K_n\), the relevant Alon--Tarsi coefficient is
\(\pm(n-1)!S_n\).  The coefficient method \cite{AT,EG}, together with
Schauz's online Combinatorial Nullstellensatz
\cite{SchauzPrimeDegree,SchauzOrientations}, therefore gives
\[
 S_n\ne0
 \quad\Longrightarrow\quad
 \chi'(K_n)=\chi'_{\ell}(K_n)=\chi'_{\mathrm P}(K_n)=n-1.
\]
Thus it remains to prove that the positive and negative factorizations
do not cancel.

Schauz proved the resulting online equality at order \(p+1\):
\[
                         \chi'_{\mathrm P}(K_{p+1})=p
\]
for every odd prime \(p\) \cite{SchauzPrimeDegree}.  Rabern's
enumeration also gives \(K_{10}\) \cite{Rabern}.  We obtain two new
infinite families.  Recall that the Legendre symbol
\((\frac{a}{p})\), for \(p\nmid a\), is \(1\) or \(-1\) according as
\(a\) is or is not a square modulo \(p\).

\begin{theorem}\label{thm:intro-pminus1}
Let \(p\) be an odd prime.  Then
\[
 S_{p-1}\equiv\left(\frac{-2}{p}\right)\pmod p,
\]
and
\[
 \chi'(K_{p-1})
 =\chi'_{\ell}(K_{p-1})
 =\chi'_{\mathrm P}(K_{p-1})=p-2.
\]
\end{theorem}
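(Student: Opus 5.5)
The plan is to go through the Alon--Tarsi reformulation recalled above: \(\pm(n-1)!\,S_n\) is the coefficient of \(\prod_{e}x_e^{\,n-2}\) in the graph polynomial of the line graph \(L(K_n)\) with \(n=p-1\). By the online coefficient criterion quoted in the introduction, the chromatic statement follows once \(S_{p-1}\not\equiv 0\pmod p\). Since \(\chi'(K_{p-1})=p-2\) is classical (\(p-1\) is even), everything then follows from (1.1). So the whole task is the congruence \(S_{p-1}\equiv(\frac{-2}{p})\pmod p\).

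First I reorganize the signed count as a sum over ordered factorizations. I assign to each of the \(p-2\) colour classes a distinct label in \(\F_p^{*}\) and write \((p-2)!\,S_{p-1}\) as a signed sum over proper colourings \(E(K_{p-1})\to\F_p^{*}\) in which every vertex sees each used colour exactly once. By Wilson's theorem \((p-2)!\equiv 1\pmod p\), so it suffices to compute this ordered signed sum modulo \(p\).

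The key device is to impose the ``each vertex sees each colour once'' constraint by character sums. For \(\lambda\in\F_p^{*}\), \(\sum_{\lambda}\lambda^{k}\equiv -1\) or \(0\) according as \((p-1)\mid k\) or not. Since \(\deg v=p-2\) is one less than \(|\F_p^{*}|\), each vertex misses exactly one label. I would therefore attach to each vertex \(v\) a variable \(a_v\in\F_p^{*}\) standing for its missing label and use the Combinatorial Nullstellensatz coefficient formula on the grid \((\F_p^{*})^{E}\) to turn the coefficient into a sum of evaluations. The goal is to collapse the edge sums and be left with
\[
\sum_{\text{labelings }v\mapsto a_v}\Pf\bigl(B(a)\bigr),\qquad B(a)_{uv}=\frac{1}{a_u-a_v}\ \text{(or a close variant such as }\tfrac{a_u+a_v}{a_u-a_v}\text{)}.
\]
This expression is the elementary matching sum over \(\F_p\) mentioned in the abstract. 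The product of crossing signs \(\varepsilon(M)\) over the colour classes is exactly what makes each colour class contribute a Pfaffian rather than a permanent-type sum, and I expect the labelings forced to be injective, so that \(\{a_v\}\) is \(\F_p^{*}\) minus one element.

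The final step evaluates this Pfaffian modulo \(p\). After a translation and scaling in \(\F_p\), the labels range over \(\F_p\) minus two points, so \(\Pf(1/(a_u-a_v))\) is a Cauchy-type Pfaffian. By Schur's Pfaffian identity it equals \(\prod_{u<v}\frac{a_u-a_v}{a_u+a_v}\), and products of this shape over nearly all of \(\F_p\) reduce, via Gauss's lemma, to a Legendre symbol; I expect \((\frac{2}{p})\) times a sign \((\frac{-1}{p})\) coming from the orientation, giving \((\frac{-2}{p})\). If Schur's identity does not apply directly, I would instead compute \(\det B\) as a circulant over \(\F_p\) and take a square root, fixing the sign separately.

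The main obstacle is the middle step: checking that after the character-sum and Nullstellensatz manipulations, the edge variables really collapse to a single Pfaffian with the correct global sign, and that the product \(\prod\varepsilon(M)\) is exactly what is needed. I would test the bookkeeping on \(p=5\) (\(K_4\), where \(S_4\) is directly computable) and \(p=7\) (\(K_6\)) before trusting the general sign.
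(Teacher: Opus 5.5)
\textbf{There is a genuine gap: the middle step is missing, and the structure you predict for it cannot occur.}

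Your outer reduction is correct and matches the paper. By Proposition~\ref{prop:coefficient-bridge} it suffices to show $S_{p-1}\not\equiv0\pmod p$, and since $(p-2)!\equiv1$ you may work with the ordered coefficient $\Theta_{p-1}=[x^{\mathbf 1}]\Pf(X)^{p-2}$. The problems are these.
\begin{itemize}
\item The grid $(\F_p^*)^E$ has $p-1$ points per coordinate. That is one more than the coefficient formula allows for exponent $p-3$, so an extra weight such as $\prod_e c_e^2$ is needed. The support is then all proper $(p-1)$-edge-colourings. In such a colouring, the vertices missing a given colour are those not covered by that colour class, which is an even set because $p-1$ is even.
\item With your $p-2$ labels, every vertex misses the same unused label. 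So $v\mapsto a_v$ is never injective; indeed $p-1$ vertices cannot inject into $p-2$ values. Hence $1/(a_u-a_v)$ is undefined on the actual support, and ``$\F_p$ minus two points'' leaves an odd number of labels, so no Pfaffian is defined.
\item More fundamentally, you never say how the global requirement that the colour classes partition $E(K_{p-1})$ disappears. Expanding the vertex Vandermondes and summing edge by edge only reproduces an equally constrained sum over exponent permutations at the vertices. A product of edge-local factors over all pairs of vertices is not a sum over perfect matchings. So no Pfaffian can emerge without something that singles out one matching.
\item Your last step misquotes Schur. His identity evaluates $\Pf\bigl(\frac{a_u-a_v}{a_u+a_v}\bigr)$. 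The Cauchy-type Pfaffian $\Pf\bigl(\frac1{a_u-a_v}\bigr)$ does not factor: at $a=(0,1,2,3)$ it equals $13/12$, whereas $\prod_{u<v}\frac{a_u-a_v}{a_u+a_v}=1/30$.
\end{itemize}

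Two missing ideas make the paper's collapse work. First, over $\F_p$ one has $(\det X)^{p-1}=\Pf(X)^p\,\Pf(X)^{p-2}$ and $\Pf(X)^p=\sum_M\varepsilon(M)x^{pM}$. Raising the target exponent to $p+1$ on one fixed matching $M_0$ therefore turns $\Theta_{p-1}$ into one coefficient of $(\det X)^{p-1}$; this is where a single matching is singled out.

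Second, Glynn's congruence $L!\,C_L\equiv(-1)^r$ says the coefficients of $(\det Y)^{p-1}$ are products of local weights $1/\ell_{ij}!$, constrained only by line sums. That is exactly the edge-by-edge factorization you hoped to get from character sums. With marker variables this gives
\[
\Theta_{p-1}=(-1)^m\Bigl[\prod_i z_i^{p-1}\Bigr]\,V_{M_0}(z)\prod_{\{u,v\}\in M_0}K(z_u,z_v).
\]
A divided-difference kernel then restores the deleted Vandermonde factors. The alternating sum over exponent assignments becomes $(-1)^m 2^m m!\,\Pf(A)$ for a single fixed matrix $A$ supported on one antidiagonal. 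Hence $\Theta_{p-1}=(-2)^m=\bigl(\frac{-2}{p}\bigr)$.

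In the paper, the Pfaffian is produced by the sum over labelings, not summed over them. The starter sum $T_p$ (the ``elementary matching sum'' of the abstract) plays no role in this proof. It is linked to $S_{p-1}$ only afterwards, in Corollary~\ref{cor:three-residues}.
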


\begin{theorem}\label{thm:intro-2p}
For every odd prime \(p\),
\[
                         S_{2p}\equiv-p\pmod{p^2}.
\]
Consequently,
\[
 \chi'(K_{2p})
 =\chi'_{\ell}(K_{2p})
 =\chi'_{\mathrm P}(K_{2p})=2p-1.
\]
\end{theorem}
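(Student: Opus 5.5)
The plan is to compute \(S_{2p}\) modulo \(p^2\) with a weighted Burnside argument for a cyclic group of order \(p\) acting on the vertices. Label the vertices of \(K_{2p}\) by \(\mathbb Z_p\times\{0,1\}\). Let \(\sigma\) be the permutation \((x,i)\mapsto(x+1,i)\), which has two \(p\)-cycles and generates a group \(C\) of order \(p\). Since \(\varepsilon(\mathcal F)\) does not depend on the vertex order, it is invariant under relabeling. Hence \(C\) acts on the set of one-factorizations with \(\varepsilon\) constant on orbits. Free orbits have size \(p\), so
\[
 S_{2p}\equiv \sum_{\mathcal F\in\Fix(\sigma)}\varepsilon(\mathcal F)\pmod p .
\]
Getting the congruence modulo \(p^2\) needs more than this, because the free orbits contribute multiples of \(p\), not of \(p^2\).

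I would enlarge the group. Take \(G=C\times C\), where the two factors rotate the two copies of \(\mathbb Z_p\) independently; the only fixed points are then pairs \((x,0),(x,1)\) in a single layer. Alternatively, take a Sylow \(p\)-subgroup of \(S_{2p}\), which is \(C_p\times C_p\) together with the swap of the layers only when \(p\mid 2\), so for odd \(p\) it is just \(C_p\times C_p\). Under \(G\), orbits have size \(1\), \(p\) or \(p^2\), and orbits of size \(p^2\) vanish modulo \(p^2\). Two pieces then have to be computed:

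\begin{itemize}
\item the signed count of factorizations fixed by all of \(G\);
\item \(p\) times the signed number of orbits of size \(p\), one term for each of the \(p+1\) subgroups of order \(p\).
\end{itemize}

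For a subgroup \(H\) of order \(p\), the \(H\)-fixed factorizations are determined by orbit data on edges. They are classified by cyclic "starter" type structures on \(\mathbb Z_p\times\{0,1\}\): a decomposition of the difference set into matchings invariant under translation, possibly permuted cyclically by \(\sigma\).

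The signed count of \(H\)-fixed factorizations reduces modulo \(p\) to an elementary matching sum over \(\F_p\). Each \(\sigma\)-invariant factorization consists of the following pieces:
\begin{itemize}
\item the \(p\) translates of one starter-type matching;
\item \(\sigma\)-invariant matchings built from "vertical" edges \(\{(x,0),(x+d,1)\}\).
\end{itemize}
The sign of a translate-closed family is computed from crossings of arcs whose endpoints are given by differences in \(\F_p\). I expect this to produce a Gauss-sum or Legendre-symbol factor, as in Theorem~\ref{thm:intro-pminus1}. In fact I would reuse the same \(\F_p\) matching sum: its evaluation gives \((\frac{-2}{p})\)-type quantities, and its square or norm should give \(-1\).

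Finally I would assemble the congruence as follows:
\begin{itemize}
\item the \(G\)-fixed factorizations should be few or nonexistent, since \(C\times C\) fixes no perfect matching structure mixing the layers beyond a rigid family, so this piece should contribute \(0\) modulo \(p^2\);
\item the size-\(p\) orbits contribute \(p\) times a sum that is \(\equiv -1\pmod p\).
\end{itemize}
Together these give \(S_{2p}\equiv -p\pmod{p^2}\). In particular \(S_{2p}\neq 0\). The Alon--Tarsi coefficient \(\pm(2p-1)!\,S_{2p}\) is then nonzero, and Schauz's online Nullstellensatz as quoted above gives \(\chi'_{\mathrm P}(K_{2p})=2p-1\). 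Combined with (1.1) and \(\chi'(K_{2p})=2p-1\), this yields the full chain of equalities.

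The main obstacle is the second step: classifying the factorizations fixed by each order-\(p\) subgroup and evaluating their signs exactly modulo \(p\). The difficulty is that \(2p\) vertices under a \(p\)-cycle pair admit several orbit types of matchings, namely within-layer, cross-layer and mixed. My first attempt would be to encode the invariant factorizations as perfect matchings of an auxiliary structure on \(\F_p^\times\) or \(\F_p\) whose sign is a Pfaffian. I would then evaluate that Pfaffian by a character-sum identity. If the contributions of the \(p+1\) subgroups do not separate cleanly, I would fall back on using only \(C\). In that case I would track the free-orbit contribution modulo \(p^2\) via a second Burnside step on the quotient, treating the count of free orbits modulo \(p\) via a Frobenius-type action.
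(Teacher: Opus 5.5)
\textbf{Verdict: genuine gap.} Your framework is sound: the group $\F_p^2$ of independent layer translations, with orbits of size $p^2$ discarded modulo $p^2$, is equivalent to the paper's weighted Burnside identity. Your final deduction of the coloring statement from $S_{2p}\neq0$ is also fine. But every step that actually produces the residue is left as an expectation, and two of your heuristics point the wrong way.

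\textbf{The axis subgroups.} You never show that $\{(t,0)\}$ and $\{(0,t)\}$ fix no one-factorization. This is what makes the $G$-fixed piece vanish; ``few'' would not be enough modulo $p^2$. It is also what restricts the contributions to the $p-1$ non-axis lines. Each of those contributes $T_p^2\equiv+1$ orbits, so the $-1$ in $S_{2p}\equiv-p$ is simply the count $p-1$. It is not a ``square or norm'' of the matching sum equal to $-1$; that square is $+1$. The missing argument is short. A one-layer translation fixes no perfect matching: the orbit of a cross edge puts $p$ edges at a fixed vertex of the other layer, and a matching without cross edges would perfectly match an odd layer. Yet a fixed factorization has $2p-1\equiv-1\pmod p$ factors permuted in orbits of size $1$ or $p$, so it has a fixed factor.

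\textbf{The non-axis lines.} Take a non-axis line $H$; it is conjugate to the diagonal by rescaling one layer, which preserves signs. You say the signed count of $H$-fixed factorizations ``reduces modulo $p$'' to a matching sum. In fact that count is $pT_p^2\equiv0\pmod p$. What your bookkeeping needs is the signed number of $G/H$-orbits in $\Fix(H)$ modulo $p$, and that requires the exact classification, with $m=(p-1)/2$:
\begin{enumerate}
\item[(a)] the invariant perfect matchings are exactly the cross matchings $M_d$;
\item[(b)] a fixed factorization contains exactly $p-1$ of them, omitting one $M_{d_0}$;
\item[(c)] the other $p$ factors are the translates of $\{(0,0),(d_0,1)\}\cup A\cup(B+d_0)$ for an ordered pair $(A,B)$ of starters;
\item[(d)] $G/H$ acts freely on $\Fix(H)$ by shifting $d_0$;
\item[(e)] the sign is $\varepsilon(A)\varepsilon(B)$, because each $M_d$ has sign $(-1)^m$ and the straddling-chord parities of the cross edges sum to an even number over the orbit.
\end{enumerate}
Only then does each non-axis line contribute $T_p^2$ orbits, giving $S_{2p}\equiv p(p-1)T_p^2\equiv-pT_p^2\pmod{p^2}$.

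\textbf{The value of $T_p$.} You still need $T_p\equiv\pm1\pmod p$, and you have no mechanism for it. The proof of Theorem~\ref{thm:intro-pminus1} (Glynn's congruence and an antidiagonal Pfaffian) never produces $T_p$, so there is nothing there to reuse. The paper uses the skew-circulant cofactor identity $\Pf(B_p)=\left(\frac2p\right)\bigl(2\sum_d d\,y_d\bigr)^m$ in $\F_p[y_1,\dots,y_m]$. It compares the coefficient of $y_1\cdots y_m$ with the starter expansion, in which long edges contribute the constant sign $(-1)^{m(m-1)/2}$. This yields $T_p\equiv-\left(\frac2p\right)$ and hence $T_p^2\equiv1$.
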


The need for two proofs has a simple source.  If the factors are
ordered, the signed sum becomes \((n-1)!S_n\).  At order \(p-1\), the
factor \((p-2)!\) is nonzero modulo \(p\), so an ordinary coefficient
calculation retains \(S_{p-1}\).  At order \(2p\), the factor
\((2p-1)!\) is divisible by \(p\), so reduction of the ordered
coefficient modulo \(p\) cannot detect \(S_{2p}\).  We instead act by
translations on two \(p\)-vertex layers and count the invariant
factorizations directly modulo \(p^2\).

The two congruences are unified by the same elementary object.  Write
\(\F_p\) for the integers modulo \(p\).  A \emph{starter} is a perfect
matching \(A\) of \(\F_p\setminus\{0\}\) such that, as \(\{a,b\}\)
runs through its pairs, the differences \(\pm(a-b)\) run through all
nonzero residues.  For example,
\(\{\{1,4\},\{2,3\}\}\) is a starter in \(\F_5\).  Order the nonzero
residues as \(1,\ldots,p-1\), give each starter its crossing sign
\(\varepsilon(A)\), and set
\[
                         T_p=\sum_{A\text{ starter}}\varepsilon(A).
\]
This signed sum satisfies
\[
                         T_p\equiv-\left(\frac2p\right)\pmod p.
\]
The same sum controls all three prime-indexed orders:
\[
 \begin{aligned}
 S_{p-1}&\equiv(-1)^{(p+1)/2}T_p &&\pmod p,\\
 S_{p+1}&\equiv(-1)^{(p-1)/2}T_p &&\pmod p,\\
 S_{2p}&\equiv-pT_p^2 &&\pmod {p^2}.
 \end{aligned}
\]

There are two useful consequences.  First, if \(n\ge4\) is even and
\(I\) is any perfect matching of \(K_n\), then
\[
 S_n\ne0\quad\Longrightarrow\quad
 \chi'(K_n-I)=\chi'_{\ell}(K_n-I)
              =\chi'_{\mathrm P}(K_n-I)=n-2.
\]
Second, label two sets of \(p\) vertices by \(\F_p\), and, for
\(d\in\F_p\), let \(M_d\) join \(x\) in the first set to \(x+d\) in
the second.  Given \(E\subset\F_p\) with \(1\le r=|E|\le p-1\), put
\(H=K_{2p}-\bigcup_{d\in E}M_d\).  Then
\[
 \chi'(H)=\chi'_{\ell}(H)=\chi'_{\mathrm P}(H)=2p-1-r.
\]

The paper is organized as follows.  Section~2 establishes the signed
coefficient and its coloring consequence.  Sections~3 and~4 compute
the counts for \(K_{p-1}\) and \(K_{2p}\), respectively.  Section~5
evaluates the common starter sum and also recovers the known result for
\(K_{p+1}\).  Section~6 proves the two deletion results just described.

\section{The signed count}

Fix the vertex order $0,1,\ldots,n-1$ on $K_n$, where $n$ is even.
For a perfect matching $M$, draw its edges as arcs above the ordered
vertices and set
\[
             \varepsilon(M)=(-1)^{\crs(M)},
\]
where $\crs(M)$ is the number of crossing pairs of arcs.  Equivalently,
for the generic skew-symmetric matrix $X=(X_{ij})$ with
\[
 X_{ij}=x_{ij},\quad X_{ji}=-x_{ij}\quad(i<j),
\]
we have
\[
                  \Pf(X)=\sum_M\varepsilon(M)x^M.
\]
Here and below,
\[
 x^M=\prod_{\{i,j\}\in M}x_{ij},
 \qquad
 x^{\mathbf{1}}=\prod_{0\le i<j<n}x_{ij}.
\]
For any polynomial $Q$, the notation $[x^\alpha]Q$ means the
coefficient of the monomial $x^\alpha$ in $Q$.
A one-factorization is a partition of the edge set into perfect
matchings.  For an unordered one-factorization $\mathcal F$ of $K_n$,
define
\[
 \varepsilon(\mathcal F)=\prod_{M\in\mathcal F}\varepsilon(M),
 \qquad
 S_n=\sum_{\mathcal F}\varepsilon(\mathcal F).
\]
More generally, if $G$ is a regular graph on the same ordered vertex
set, write $S(G)$ for the corresponding signed sum over its unordered
one-factorizations, with $S(G)=0$ when none exist.
The corresponding squarefree Pfaffian coefficient is
\[
 \Theta_n=[x^{\mathbf{1}}]\Pf(X)^{n-1}=(n-1)!\,S_n,       \tag{2.1}
\]
since a contribution to the squarefree monomial is exactly an ordered
one-factorization.  The $n-1$ perfect matchings in a factorization are
distinct, so each unordered factorization has exactly $(n-1)!$
orderings.

We shall need the following relabeling law.  Its specialization to a
complete graph says that the factorization weight is invariant.

\begin{lemma}[relabeling law]\label{lem:sign-invariance}
Let $G$ be a $k$-regular graph on an ordered vertex set, let $\pi$ be an
automorphism of $G$, let $P_\pi$ be its permutation matrix, and put
\[
 \iota_G(\pi)=
 \#\{\{i,j\}\in E(G):i<j,\ \pi(i)>\pi(j)\}.
\]
Then every one-factorization $\mathcal F$ of $G$ satisfies
\[
 \varepsilon(\pi\mathcal F)
 =\det(P_\pi)^k(-1)^{\iota_G(\pi)}\varepsilon(\mathcal F).
\]
In particular, if $G=K_n$ and $n$ is even, then
$\varepsilon(\pi\mathcal F)=\varepsilon(\mathcal F)$.
\end{lemma}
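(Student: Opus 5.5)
We prove the relabeling law one matching at a time and then multiply over the $k$ factors. The key input is the Pfaffian identity $\Pf(BAB^{T})=\det(B)\Pf(A)$ for a skew-symmetric $A$. We use it in a combinatorial form: for a perfect matching $M$ of the ordered vertex set, the sign $\varepsilon(M)$ equals the sign of the permutation $\sigma_M=(a_1\,b_1\,a_2\,b_2\cdots)$. Here the edges $\{a_t,b_t\}$ of $M$ are written with $a_t<b_t$ and listed in any order. Swapping two edges is an even permutation, so the listing order does not matter. Crossings of arcs agree with inversions of $\sigma_M$ modulo $2$; this is the standard expansion $\Pf(X)=\sum_M\varepsilon(M)x^M$ stated above.

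Now fix a perfect matching $M$ of $G$ and its image $\pi M$. Take the word $\sigma_M=(a_1 b_1\cdots)$ and apply $\pi$ letter by letter. This gives the word $(\pi a_1\,\pi b_1\cdots)$, whose sign is $\sgn(\pi)\sgn(\sigma_M)$, since it is $\pi\circ\sigma_M$ viewed as a map from positions to vertices. This word lists the edges of $\pi M$, but in some pairs the smaller endpoint now comes second, namely those edges $\{a_t,b_t\}$ with $\pi(a_t)>\pi(b_t)$. Transposing the two entries of each such pair costs a factor $-1$ each. Writing $\iota_M(\pi)$ for the number of edges of $M$ inverted by $\pi$, we get
\[
 \varepsilon(\pi M)=\det(P_\pi)\,(-1)^{\iota_M(\pi)}\varepsilon(M).
\]
Equivalently, the substitution $x_{ij}\mapsto x_{\pi i\,\pi j}$ together with the skew-symmetry $X_{ji}=-X_{ij}$ acts as $P_\pi X P_\pi^{T}$, and one reads off monomials from $\Pf(P_\pi XP_\pi^T)=\det(P_\pi)\Pf(X)$. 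I would present the combinatorial version, since it avoids subtleties about which variables are present.

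Taking the product over the $k$ matchings of a one-factorization $\mathcal F$ of the $k$-regular graph $G$ gives $\det(P_\pi)^k(-1)^{\sum_M\iota_M(\pi)}$. Since the matchings partition $E(G)$, the exponent $\sum_M\iota_M(\pi)$ equals $\iota_G(\pi)$. The map $\pi$ sends $\mathcal F$ to a one-factorization $\pi\mathcal F$ of $G$ because $\pi$ is an automorphism. This proves the general formula.

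For the special case $G=K_n$ with $n$ even, we have $k=n-1$, so $\det(P_\pi)^{k}=\sgn(\pi)$. Also $\iota_{K_n}(\pi)$ is the number of pairs $i<j$ with $\pi(i)>\pi(j)$, which is the inversion number of $\pi$, so $(-1)^{\iota}=\sgn(\pi)$. The two factors cancel. The main point needing care is the sign bookkeeping in the second paragraph: checking that crossings in the arc diagram match the parity of the interleaving permutation. I would take this from the standard Pfaffian expansion already quoted rather than reprove it.
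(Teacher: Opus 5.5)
Your proof is correct and follows the paper's route. You prove the one-matching law $\varepsilon(\pi M)=\det(P_\pi)(-1)^{\iota_M(\pi)}\varepsilon(M)$, multiply it over the $k$ factors (which partition $E(G)$), and use $\det(P_\pi)^{n-1}(-1)^{\iota_{K_n}(\pi)}=\sgn(\pi)^2=1$ for $K_n$. The only difference is that you derive the one-matching law directly from the interleaving-permutation form of the Pfaffian sign, whereas the paper reads it off from $\Pf(P_\pi XP_\pi^{\mathsf T})=\det(P_\pi)\Pf(X)$; these are the same computation.
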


\begin{proof}
The Pfaffian identity
$\Pf(P_\pi XP_\pi^{\mathsf T})=\det(P_\pi)\Pf(X)$ gives the
transformation law for one matching.  After the variables are restored
to the convention $x_{uv}$ with $u<v$, it reads
\[
 \varepsilon(\pi M)=\det(P_\pi)(-1)^{f(\pi,M)}\varepsilon(M),
 \qquad
 f(\pi,M)=\#\{\{i,j\}\in M:i<j,\ \pi(i)>\pi(j)\}.
\]
Multiplication over the $k$ factors of $\mathcal F$ gives the stated
formula, because those factors partition $E(G)$ and therefore
$\sum_{M\in\mathcal F}f(\pi,M)=\iota_G(\pi)$.

For $G=K_n$, one has $k=n-1$ and
$(-1)^{\iota_G(\pi)}=\det(P_\pi)$.  Hence
\[
 \varepsilon(\pi\mathcal F)
 =\det(P_\pi)^n\varepsilon(\mathcal F)
 =\varepsilon(\mathcal F),
\]
because $n$ is even.
\end{proof}

We next prove explicitly that the Pfaffian sign is the sign seen by the
graph polynomial.  For a graph $H$ whose vertices have been ordered,
write
\[
                         P_H=\prod_{\substack{uv\in E(H)\\u<v}}(x_u-x_v)
\]
for its graph polynomial.

We use the following form of the online Combinatorial Nullstellensatz.
If $t_v\ge0$, if $\sum_vt_v=|E(H)|$, and if
\[
                         \left[\prod_vx_v^{t_v}\right]P_H\ne0,
\]
then $H$ is paintable when each vertex $v$ begins with $t_v+1$ tokens
\cite{SchauzPrimeDegree,SchauzOrientations}.  We apply this to the line
graph $L(G)$, whose vertices, and hence whose variables, are the edges
of $G$.

\begin{proposition}[coefficient and paintability bridge]
\label{prop:coefficient-bridge}
Let $G$ be a simple $k$-regular graph of class $1$ on an even number of
ordered vertices.  If $S(G)$ is the sum of the Pfaffian signs of its
unordered one-factorizations, then, for some constant
$\delta_G\in\{\pm1\}$ depending only on the chosen vertex and edge
orders,
\[
 \left[\prod_{e\in E(G)}x_e^{k-1}\right]P_{L(G)}
                         =\delta_G k!\,S(G).              \tag{2.2}
\]
Consequently,
\[
 S(G)\ne0\quad\Longrightarrow\quad
 \chi'_{\mathrm P}(G)=\chi'_{\ell}(G)=k.                 \tag{2.3}
\]
\end{proposition}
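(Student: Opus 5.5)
We prove (2.2) by the classical Alon--Tarsi/Ellingham--Goddyn route: expand the line-graph polynomial by cliques, then read off the coefficient in terms of ordered one-factorizations. The key observation is that $L(G)$ is the edge-disjoint union of the $|V(G)|$ cliques $L_v$ formed by the $k$ edges at each vertex $v$. Hence
\[
 P_{L(G)}=\pm\prod_{v}\prod_{\substack{e,f\ni v\\ e<f}}(x_e-x_f),
\]
and each inner factor is a Vandermonde determinant on the $k$ variables $\{x_e: e\ni v\}$. The sign comes from the fixed edge order.

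Expanding each Vandermonde, the product becomes a sum over choices, for every vertex $v$, of a bijection $\sigma_v$ from the edges at $v$ onto the exponents $\{0,\dots,k-1\}$, weighted by $\sgn(\sigma_v)$. The monomial obtained gives $x_e$ the exponent $\sigma_u(e)+\sigma_w(e)$, where $e=uw$.

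We want every exponent to equal $k-1$. This forces $\sigma_w(e)=k-1-\sigma_u(e)$ along every edge. Since $k-1-\cdot$ is an involution, such a family $(\sigma_v)$ is determined by a choice, at each edge, of which endpoint is ``low'', together with a proper labelling. The cleanest way to organize this is to define $c(e)$ as the unordered pair $\{\sigma_u(e),k-1-\sigma_u(e)\}$.

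At this point parity matters. When $k$ is even, these pairs produce $k/2$ classes, so the labelling is not directly a colouring. The alternative I would actually follow is the Ellingham--Goddyn count: the coefficient equals a signed count of proper $k$-edge-colourings $c:E\to\{0,\dots,k-1\}$ with orientations. Concretely, assign each edge $e=uw$ an orientation, set $\sigma_{\text{tail}}(e)=c(e)$, and constrain the head side accordingly.

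Rather than fight with this, I would use the standard identity for regular graphs, due to Alon and Ellingham--Goddyn:
\[
 \Bigl[\prod_e x_e^{k-1}\Bigr]P_{L(G)}=\pm\sum_{c}\prod_v \sgn(c|_v),
\]
where the sum runs over proper $k$-edge-colourings $c$ and $\sgn(c|_v)$ is the sign of $c$ restricted to the edges at $v$, viewed as a permutation relative to the fixed edge order at $v$. This holds up to a global factor that does not depend on $c$.

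The next step identifies $\prod_v\sgn(c|_v)$ with the Pfaffian sign $\prod_i\varepsilon(M_i)$ of the ordered factorization $M_i=c^{-1}(i)$, up to a constant $\delta_G$. To do this, I would compare with $[x^{\mathbf 1}]\prod_i\Pf(X_G)$ restricted to $E(G)$, and use the Pfaffian-as-determinant-square relations: any two colourings differ by Kempe/permutation moves, and under each move both sides change by the same sign. Concretely, swapping colours $i,j$ on an alternating cycle $C$ of length $2m$ changes $\prod_v\sgn$ by $(-1)^{2m}$ times a transposition at each of the $2m$ vertices of $C$, giving $(-1)^{2m}=1$ overall. The Pfaffian side behaves the same way, because $\varepsilon(M_i)\varepsilon(M_j)=\varepsilon(M_i')\varepsilon(M_j')$ follows from $\Pf\cdot\Pf$ being the determinant of the bipartite double with cycle signs. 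This step, matching the two signs with a constant independent of the colouring even when Kempe moves do not connect all factorizations, is the main obstacle. I would handle it by a direct local computation: write both signs as $\det$ of the incidence-type matrix $(\sigma_v)$ and invoke multiplicativity, rather than relying on connectivity.

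Summing over colourings then gives $k!\,S(G)$, because permuting colour names leaves both signs unchanged. Finally, (2.3) follows from the online Combinatorial Nullstellensatz with $t_e=k-1$: the condition $\sum_e t_e=|E|(k-1)=|E(L(G))|$ holds since $L(G)$ is $(2k-2)$-regular. This gives paintability with $k$ tokens, and $\chi'(G)=k$ together with (1.1) completes the argument.
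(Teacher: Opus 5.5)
\textbf{There is a genuine gap} at the step you flag as the main obstacle: showing that $\prod_v\sgn(c|_v)\cdot\prod_a\varepsilon(c^{-1}(a))$ takes the same value for every proper $k$-edge-coloring $c$. Beyond the standard reduction of the coefficient to $\pm\sum_c\prod_v\sgn(c|_v)$, this identification is the real content of (2.2), and your proposal does not prove it.

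Your Kempe computation does show that both quantities are unchanged by a swap along a bicolored cycle. On the Pfaffian side this holds because $\varepsilon(M_i)\varepsilon(M_j)$ depends only on the multigraph $M_i\cup M_j$. But this only gives a constant on each Kempe class, and your claim that ``any two colourings differ by Kempe/permutation moves'' is false in general. In a perfect one-factorization (these exist, e.g.\ for $K_8$), every bicolored Kempe chain is a Hamiltonian cycle, so a Kempe swap merely renames two colors. Such a coloring is therefore isolated from every other factorization. Your fallback, ``write both signs as $\det$ of the incidence-type matrix $(\sigma_v)$ and invoke multiplicativity,'' is not a defined argument. As written, the global constant $\delta_G$ is not established.

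\textbf{The missing idea} is a permutation-sign computation on the $k|V(G)|$ half-edges. Start from the order ``by vertex, then by the fixed edge order.''
\begin{enumerate}
\item Sort each vertex block by color. This costs $\prod_v\sgn(\rho_v)$.
\item Transpose the resulting vertex-by-color array into color-by-vertex order. The sign is independent of $c$, because every vertex carries exactly one half-edge of each color.
\item Inside the block of color $a$, regroup the vertices into consecutive endpoint pairs of $M_a$, smaller endpoint first. This costs exactly the Pfaffian sign $\varepsilon(M_a)$.
\item Sort the $2$-blocks by the fixed edge order. This is an even permutation.
\end{enumerate}
The initial and final orders do not depend on $c$, so $\prod_v\sgn(\rho_v)=\delta'_G\prod_a\varepsilon(M_a)$.

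For the first step, the paper does not cite the identity. It applies the quantitative Combinatorial Nullstellensatz on the grid $\{1,\ldots,k\}^{E(G)}$. Only proper colorings give nonzero terms, and each numerator equals $\Delta^{|V(G)|}\prod_v\sgn(\rho_v)$, using that $G$ is simple. Every denominator is the same constant. This replaces your abandoned direct Vandermonde expansion, whose compatible families $(\sigma_v)$ are not in any simple bijection with colorings.
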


\begin{proof}
Fix an order of $E(G)$ and give the incident edges at every vertex the
induced order.  Put $A=\{1,\ldots,k\}$ and
\[
 N(a)=\prod_{t\in A\setminus\{a\}}(a-t),
 \qquad
 \Delta=\prod_{1\le i<j\le k}(i-j).
\]
The total degree of $P_{L(G)}$ is $|E(G)|(k-1)$.  The quantitative
Combinatorial Nullstellensatz therefore evaluates the coefficient on
the left of (2.2) as
\[
 \sum_{c\in A^{E(G)}}
 \frac{P_{L(G)}(c)}{\prod_{e\in E(G)}N(c(e))}.             \tag{2.4}
\]
Only proper $k$-edge-colorings contribute.  In such a coloring every
color occurs once at each vertex, hence each color class is a perfect
matching and contains $|V(G)|/2$ edges.

For a contributing coloring $c$, let $\rho_v\in\mathfrak S_k$ be the
permutation whose values are the colors on the incident edges at $v$,
read in their fixed order.  Since every adjacent pair of edges in a
simple graph has a
unique common endpoint,
\[
 P_{L(G)}(c)=\Delta^{|V(G)|}
             \prod_{v\in V(G)}\sgn(\rho_v).
\]
Moreover,
\[
 \prod_{e\in E(G)}N(c(e))
   =\left(\prod_{a=1}^kN(a)\right)^{|V(G)|/2}
   =\left((-1)^{\binom{k}{2}}\Delta^2\right)^{|V(G)|/2}.
\]
Thus the summand in (2.4) is a fixed sign, independent of $c$, times
$\prod_v\sgn(\rho_v)$.

It remains to identify this local sign.  Consider the set of half-edges
of $G$.  First order it by vertex and then by the fixed edge order.
Sorting the half-edges at each vertex by their colors has sign
$\prod_v\sgn(\rho_v)$.  Transposing the resulting vertex-by-color array
to color-by-vertex order has a sign independent of $c$.  Within the
block of a color $a$, reorder the vertices into consecutive endpoint
pairs of the matching $M_a=c^{-1}(a)$, writing the smaller endpoint
first in each pair.  The sign of this last
permutation is precisely $\varepsilon(M_a)$: it is the Pfaffian sign of
the ordered matching.  Finally, sort the two-element blocks globally by
the fixed edge order.  This introduces no sign, because an interchange
of two blocks of size two is even.  The initial and final half-edge
orders are now fixed.  Comparing the signs of the composite reordering
gives
\[
 \prod_v\sgn(\rho_v)=\delta'_G
                     \prod_{a=1}^k\varepsilon(M_a),       \tag{2.5}
\]
where $\delta'_G$ is independent of $c$.  This is the
Alon--Tarsi--Ellingham--Goddyn sign bridge; the same identification is
given in \cite[Theorem~3.12]{SchauzOrientations}.

Each unordered one-factorization admits exactly $k!$ assignments of
the colors in $A$, and (2.5) shows that all have the same sign.
Substitution in (2.4) proves (2.2).  If $S(G)\ne0$, the coefficient is a
nonzero integer and hence is nonzero over $\mathbb Q$.  The online
criterion above, with exponent $k-1$ at every vertex of $L(G)$, gives
$k$-paintability.  The reverse inequalities follow from
$\chi'(G)=k$ and (1.1).
\end{proof}

For $G=K_n$ we have $k=n-1$ and $S(G)=S_n$.  We shall prove $S_n\ne0$
by finding a nonzero residue modulo a suitable prime.  Since $S_n$ is
an integer, such a residue suffices.

\section{Complete graphs of order
\texorpdfstring{$p-1$}{p-1}}

Fix an odd prime $p$ and put
\[
                         N=p-1=2m.
\]
Since $(p-2)!$ is nonzero modulo $p$, equation~(2.1) shows that it is
enough to compute the ordered Pfaffian coefficient $\Theta_N$.
The calculation has three steps.  Frobenius first singles out one matching
$M_0$.  Glynn's theorem then replaces the remaining directed coefficient by
explicit factorial weights.  Finally, a divided difference restores a
deleted Vandermonde and reduces the answer to a Pfaffian supported on a
single antidiagonal.

All calculations in Sections~\ref{sec:glynn-bridge} and
\ref{sec:antidiagonal} take place over $\F_p$; we use the same notation
for an integer coefficient and its reduction modulo $p$.  We use the
consecutive matching
\[
 M_0=\bigl\{\{0,1\},\{2,3\},\ldots,\{N-2,N-1\}\bigr\},
                                                               \tag{3.1}
\]
whose arcs are pairwise noncrossing, so that $\varepsilon(M_0)=1$.

\subsection{Glynn's coefficient congruence}\label{sec:glynn-bridge}

Glynn's congruence removes the permutation expansion from a determinant
coefficient: once the line sums are fixed, only factorial weights remain.

For an $r\times r$ matrix $Y=(y_{ij})$ and a nonnegative integral matrix
$L=(\ell_{ij})$, write
\[
              (\det Y)^{p-1}=\sum_L C_LY^L,
 \qquad
 Y^L=\prod_{i,j}y_{ij}^{\ell_{ij}},
 \qquad
 L!=\prod_{i,j}\ell_{ij}!.
\]
Every monomial that occurs has all row and column sums of $L$ equal to
$p-1$, because each determinant factor contributes exactly one entry
from every row and every column.  Glynn's theorem is the following
uniform statement~\cite{Glynn}; the same coefficient form is also
recorded in~\cite{ItohShimoyoshi}.

\begin{theorem}[Glynn]\label{thm:glynn}
If every row sum and every column sum of $L$ is $p-1$, then
\[
                         L!\,C_L\equiv(-1)^r\pmod{p}.          \tag{3.2}
\]
\end{theorem}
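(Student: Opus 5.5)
I would prove Glynn's congruence by extracting the coefficient $C_L$ with a finite-field character sum, in the style of the quantitative Combinatorial Nullstellensatz already used in the proof of Proposition~\ref{prop:coefficient-bridge}.

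The starting point is the identity, valid in $\F_p$ for every exponent $0\le e\le p-1$,
\[
 \sum_{t\in\F_p} t^{\,p-1-e}\cdot\frac{1}{e!}\,(\cdots)
\]
and, more usefully, the following basic facts. For $0\le a\le p-1$ one has $\sum_{t\in\F_p}t^a=-[a=p-1]$. For $0\le \ell\le p-1$ one has the binomial congruence $\binom{p-1}{\ell}\equiv(-1)^\ell$, equivalently $\ell!\,(p-1-\ell)!\equiv(-1)^{\ell+1}$.

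The plan is to expand
\[
 (\det Y)^{p-1}=\prod_{s=1}^{p-1}\det Y
\]
as a sum over $(p-1)$-tuples of permutations $(\sigma_1,\dots,\sigma_{p-1})$. Each tuple has sign $\prod\sgn\sigma_s$ and monomial $Y^{L(\sigma)}$, where $\ell_{ij}=\#\{s:\sigma_s(i)=j\}$. Thus $C_L$ is the signed number of such tuples producing $L$.

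To control the signs, I would substitute $y_{ij}=u_iv_jz_{ij}$, or better use the multilinear trick. Write $\det Y=\sum_\sigma\sgn\sigma\prod_i y_{i\sigma(i)}$ and use the permanent-type identity
\[
 (\det Y)^{p-1}\equiv \prod_{i}\Bigl(\sum_j y_{ij}t_j\Bigr)^{p-1}
\]
after suitable averaging over $t\in\F_p^r$. Concretely, over $\F_p$ one has $(\det Y)^{p-1}=1-[\det Y=0]$ as a function of the entries, and
\[
 1-[\det Y=0]=(-1)^{r}\!\!\sum_{t\in\F_p^r}\ \prod_{i=1}^r\Bigl(1-\bigl(\textstyle\sum_j y_{ij}t_j\bigr)^{p-1}\Bigr)\cdot(\text{correction}).
\]
The cleanest route is Glynn's own: the reduced polynomial of degree $\le p-1$ in each variable representing the function $Y\mapsto(\det Y)^{p-1}$ is unique. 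Monomials with all line sums equal to $p-1$ have every exponent at most $p-1$, so they are already reduced. The coefficient $C_L$ can therefore be read off from the function by the inversion formula
\[
 C_L=\prod_{i,j}\Bigl(-\sum_{y\in\F_p}y^{\,p-1-\ell_{ij}}\Bigr)\text{-weighted sum}
   =(-1)^{r^2}\sum_{Y\in\F_p^{r\times r}}(\det Y)^{p-1}\prod_{i,j}y_{ij}^{\,p-1-\ell_{ij}},
\]
with the convention $0^0=1$.

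Then evaluate the right side by counting invertible matrices weighted by a monomial. I would sum first over $Y=DZ$ with $D$ diagonal to exploit the fact that row sums of $p-1-\ell_{ij}$ equal $r(p-1)-(p-1)$. Then apply the Vandermonde/factorial identity $\ell!(p-1-\ell)!\equiv(-1)^{\ell+1}$ entrywise. This should yield $L!\,C_L\equiv(-1)^r$.

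The main obstacle is this last evaluation: showing that the weighted sum over $GL_r(\F_p)$ collapses to $\prod(-1)^{\ell_{ij}+1}/\prod(p-1-\ell_{ij})!$ up to the sign $(-1)^r$. I would attack it by induction on $r$, doing row reduction on the first row. Alternatively, and perhaps more simply, I would use the determinant-free characterization $(\det Y)^{p-1}\equiv\sum_{L}\frac{(-1)^r}{L!}Y^L$ (restricted to line sums $p-1$). I would verify this by checking that both sides agree as functions on $\F_p^{r\times r}$: the right side is a multinomial-type expansion of $\prod_i\bigl(\sum_j y_{ij}t_j\bigr)^{p-1}$ averaged over $t$. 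Since the paper permits citing~\cite{Glynn}, in practice I would simply invoke it.
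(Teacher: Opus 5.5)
The paper does not prove this statement. It quotes it from Glynn, noting that Itoh--Shimoyoshi record the same coefficient form, so your closing decision to invoke it is exactly what the paper does. The argument you actually sketch, however, has a genuine gap in its main line.

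Reducing to $C_L=(-1)^r\sum_{Y\in GL_r(\F_p)}Y^B$ with $B=(p-1)J-L$ is correct. The extra terms that the convention $0^0=1$ admits, with $a_{ij}=p-1$ where $\ell_{ij}=0$, are excluded by the line sums. But evaluating that sum is the whole theorem, and you leave it open:
\begin{itemize}
\item The diagonal rescaling gives nothing. Every row sum of $B$ is $(r-1)(p-1)$, so it contributes only $\prod_i d_i^{(r-1)(p-1)}=1$.
\item The row-reduction induction is not described.
\item The value you aim for is wrong. Since $(-1)^{\ell+1}/(p-1-\ell)!\equiv\ell!$, your target is $\pm L!$. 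The theorem instead requires the sum to equal $1/L!=\prod_{i,j}(-1)^{\ell_{ij}+1}(p-1-\ell_{ij})!$. These differ whenever $(L!)^2\not\equiv\pm1$; for instance, $p=7$ with rows $(3,2,1),(2,2,2),(1,2,3)$ gives $L!\equiv4$.
\item The opening displays (the unfinished sum involving $t^{p-1-e}$, and the identity with an unspecified ``correction'') are not usable as written.
\end{itemize}

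Your ``alternatively'' route, on the other hand, is correct and short. Make it the proof and carry out its two checks.

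Put $\Sigma(Y)=\sum_{t\in\F_p^r}\prod_i\bigl(\sum_j y_{ij}t_j\bigr)^{p-1}$. This must be a sum, not an average, because one cannot divide by $p^r$.

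\emph{Polynomial side.} By the multinomial theorem, $\Sigma$ is the sum over $L$ with row sums $p-1$ of $\frac{((p-1)!)^r}{L!}Y^L\prod_j\sum_{t_j\in\F_p}t_j^{c_j(L)}$, where $c_j(L)$ are the column sums. Now $\sum_{t\in\F_p}t^c$ equals $-1$ when $c$ is a positive multiple of $p-1$ and $0$ otherwise (including $c=0$), and $\sum_jc_j(L)=r(p-1)$. So only $L$ with all line sums $p-1$ survive, each with coefficient $((p-1)!)^r(-1)^r/L!=1/L!$ by Wilson.

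\emph{Function side.} Pointwise, $\Sigma(Y)\equiv\#\{t:Yt\in(\F_p^\times)^r\}$. This is $(p-1)^r\equiv(-1)^r$ when $Y$ is invertible, and a multiple of $|\ker Y|=p^{\,r-\operatorname{rk}Y}\equiv0$ otherwise. Hence $\Sigma=(-1)^r(\det Y)^{p-1}$ as functions.

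\emph{Conclusion.} Both polynomials have all exponents at most $p-1$, so they coincide in $\F_p[Y]$. Comparing coefficients gives $L!\,C_L\equiv(-1)^r$. With this written out, you would have a self-contained proof of a result the paper only cites.
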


We apply this with $r=N$, which is even.  Every relevant entry of $L$ is
at most $p-1$, so all local factorials are units and
\[
                         C_L\equiv\frac1{L!}\pmod{p}.        \tag{3.3}
\]

Frobenius isolates the Pfaffian coefficient to which we apply (3.3).
Since $\det X=\Pf(X)^2$ and $N=p-1$,
\[
 (\det X)^N=\Pf(X)^{2p-2}=\Pf(X)^p\Pf(X)^{p-2}.             \tag{3.4}
\]
Moreover, over $\F_p$,
\[
                         \Pf(X)^p=\sum_M\varepsilon(M)x^{pM}.
\]
Define undirected target exponents
\[
 a_{ij}=
 \begin{cases}
 p+1,&\{i,j\}\in M_0,\\
 1,&\{i,j\}\notin M_0.
 \end{cases}                                               \tag{3.5}
\]
Only $M_0$ can be selected from the Frobenius factor: a matching
$M\ne M_0$ contains an edge outside $M_0$, whose target exponent $1$
cannot absorb the exponent $p$.  Since $\varepsilon(M_0)=1$ and the
residual exponents $a_{ij}-p\,[\,\{i,j\}\in M_0\,]$ are all equal to
$1$, equation (3.4) gives
\[
 \Theta_N=\left[\prod_{i<j}x_{ij}^{a_{ij}}\right](\det X)^N.
                                                               \tag{3.6}
\]

We first apply (3.3) to $(\det Y)^N$, and only afterward impose the skew
specialization
\[
 y_{ij}=x_{ij},\qquad y_{ji}=-x_{ij}\quad(i<j),\qquad y_{ii}=0.
\]
Fix an edge $\{i,j\}$ whose target exponent is $a$.  A directed lift uses
$y_{ij}$ with exponent $s$ and $y_{ji}$ with exponent $t$, where $s+t=a$;
after the skew specialization its local weight is $(-1)^t/(s!t!)$.  Thus all
directed lifts of this one edge are recorded by
\[
 G_a(u,v)=
 \sum_{\substack{s+t=a\\0\le s,t\le N}}
             \frac{(-1)^t u^sv^t}{s!\,t!}.                  \tag{3.7}
\]
Diagonal terms vanish under the specialization.  To impose the row and
column sums, introduce markers $r_0,\ldots,r_{N-1}$ and
$c_0,\ldots,c_{N-1}$.  An occurrence of $y_{ij}$ contributes $r_ic_j$;
hence the exponent of $r_i$ is the $i$th row sum and that of $c_j$ is the
$j$th column sum.  Equations (3.3) and (3.6) now give
\[
 \Theta_N=
 \left[\prod_i r_i^Nc_i^N\right]
 \prod_{0\le i<j<N}G_{a_{ij}}(r_ic_j,r_jc_i).              \tag{3.8}
\]
For an edge outside $M_0$,
\[
                         G_1(u,v)=u-v.                     \tag{3.9}
\]
For a matched edge, $a=p+1=N+2$, and the constraint $s,t\le N$ forces
$2\le s\le N$.  Writing $t=p+1-s$ and using
$(p-k)!\equiv(-1)^k/(k-1)!\pmod{p}$ (a form of Wilson's theorem), one
finds $(-1)^t/(s!\,t!)\equiv-1/\bigl(s(s-1)\bigr)$, so that
\[
 G_{p+1}(u,v)
   =-\sum_{s=2}^{N}\frac{u^sv^{N+2-s}}{s(s-1)}.            \tag{3.10}
\]

Equation (3.8) still uses two families of markers, although after the skew
specialization only one is needed.  Put $r_i=z_ic_i$.  Each occurrence on
the edge $\{i,j\}$ then contributes one factor of both $c_i$ and $c_j$.
The total target degree at every vertex is $(N+2)+(N-2)=2N$, so the powers
of the $c_i$ are forced; it remains only to extract $z_i^N$ at each vertex.

For an edge outside $M_0$, equation (3.9) now contributes $z_i-z_j$.
These factors form the Vandermonde with precisely the $M_0$-factors
omitted.  Each edge of $M_0$ contributes the same homogeneous kernel.
Accordingly, put
\[
 V_{M_0}(z)=
 \prod_{\substack{0\le i<j<N\\\{i,j\}\notin M_0}}(z_i-z_j),
 \qquad
 K(x,y)=\sum_{s=2}^{N}\frac{x^sy^{N+2-s}}{s(s-1)}.
                                                               \tag{3.11}
\]
There are $m$ negative signs in (3.10), one for each block of $M_0$,
and hence
\[
 \Theta_N=(-1)^m T,
 \qquad
 T=\left[\prod_i z_i^N\right]
      V_{M_0}(z)\prod_{\{u,v\}\in M_0}K(z_u,z_v).         \tag{3.12}
\]
At this point the determinant and Frobenius calculations are finished; it
remains to evaluate the single coefficient $T$.

\subsection{The antidiagonal Pfaffian}\label{sec:antidiagonal}

The expression $T$ contains the Vandermonde with the $m$ factors belonging
to $M_0$ deleted.  To restore the missing factor $z_u-z_v$ on a block
$\{u,v\}$, we move one unit of exponent to $u$ or to $v$.  We therefore seek
a kernel whose discrete difference
\[
                  \kappa(a+1,b)-\kappa(a,b+1)
\]
is the complementary coefficient of $K$.  The complementary monomial is
$x^{N-a}y^{N-b}$.  Since $K$ has degree $N+2$, its coefficient can be
nonzero only when $a+b=N-2$.  The full Vandermonde is
\[
                         V(z)=\prod_{i<j}(z_i-z_j).         \tag{3.13}
\]

The following kernel has exactly the required property.
For $0\le\alpha,\beta\le N-1$, define
\[
 \kappa(\alpha,\beta)=
 \begin{cases}
 -1/(\alpha+1),&\alpha+\beta=p-2,\\
 0,&\text{otherwise}.
 \end{cases}                                               \tag{3.14}
\]

\begin{lemma}[divided difference]\label{lem:divided-difference}
Writing the blocks of $M_0$ as $\{u_i,v_i\}$ with $u_i<v_i$,
\[
 T=\sum_{\gamma\in\{0,\ldots,N-1\}^N}
       [z^\gamma]V(z)\prod_{i=1}^m
       \kappa(\gamma_{u_i},\gamma_{v_i}).                 \tag{3.15}
\]
\end{lemma}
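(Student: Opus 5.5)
The plan is to expand both sides of (3.15) as sums over exponent vectors and compare them block by block. The key input is a one-variable identity saying that the discrete difference of $\kappa$ reproduces the coefficients of $K$. Throughout, we work over $\F_p$ as in Section~\ref{sec:glynn-bridge}.

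\emph{Step 1: rewrite $T$.} Write
\[
 k(a,b)=[x^{N-a}y^{N-b}]K(x,y).
\]
By (3.11), $k(a,b)$ is nonzero only if $a+b=N-2$ and $0\le a\le N-2$, in which case $k(a,b)=1/(s(s-1))$ with $s=N-a$. Expanding the coefficient extraction in (3.12), first over monomials $z^\beta$ of $V_{M_0}$ and then over the remaining block factors, gives
\[
 T=\sum_{\beta}[z^\beta]V_{M_0}(z)\prod_{i=1}^m k(\beta_{u_i},\beta_{v_i}).
\]
Each vertex has degree $N-2$ in the graph underlying $V_{M_0}$, so every $\beta$ that occurs satisfies $\beta_j\le N-2$.

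\emph{Step 2: restore the deleted factors.} We have
\[
 V(z)=V_{M_0}(z)\prod_i(z_{u_i}-z_{v_i}).
\]
Hence $[z^\gamma]V$ is a signed sum over $\beta$ and over a choice, for each block, of adding a unit either at $u_i$ (sign $+$) or at $v_i$ (sign $-$). Every resulting $\gamma$ has entries at most $N-1$, so the restriction $\gamma\in\{0,\ldots,N-1\}^N$ in (3.15) loses nothing. Substituting and interchanging the sums, the right side of (3.15) becomes
\[
 \sum_\beta[z^\beta]V_{M_0}(z)\prod_{i=1}^m\Bigl(\kappa(\beta_{u_i}+1,\beta_{v_i})-\kappa(\beta_{u_i},\beta_{v_i}+1)\Bigr).
\]
The product factorizes over blocks because the $u_i$ and $v_i$ are distinct vertices and $\kappa$ depends only on the two coordinates of its own block. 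All $\kappa$ arguments stay in the range $0,\ldots,N-1$, since $\beta\le N-2$.

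\emph{Step 3: the local identity.} It remains to check that
\[
 \kappa(a+1,b)-\kappa(a,b+1)=k(a,b)\qquad\text{for }0\le a,b\le N-2.
\]
If $a+b\ne N-2=p-3$, then neither $(a+1,b)$ nor $(a,b+1)$ lies on the antidiagonal $\alpha+\beta=p-2$, so both sides vanish. If $a+b=p-3$, the left side is
\[
 -\frac1{a+2}+\frac1{a+1}=\frac1{(a+1)(a+2)}.
\]
On the other side, $s=N-a=p-1-a\equiv-(a+1)$ and $s-1\equiv-(a+2)\pmod p$, so $k(a,b)=1/(s(s-1))$ equals the same quantity. These inverses exist because $1\le a+1<a+2\le N<p$. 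The condition $0\le a\le N-2$ is automatic here, so the case split is complete.

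The only genuinely delicate point is the bookkeeping of ranges in Steps 1 and 2. The truncations in $G_{p+1}$ (namely $2\le s\le N$) and in the domain of $\kappa$ must match exactly the exponents that actually occur, including the boundary cases $a=0$ and $b=0$. I would verify that carefully, but I expect no obstruction.
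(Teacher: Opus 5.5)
Your proof is correct and follows the paper's argument: you factor $V=V_{M_0}\prod_i(z_{u_i}-z_{v_i})$, reindex so that each block contributes $\kappa(\beta_{u_i}+1,\beta_{v_i})-\kappa(\beta_{u_i},\beta_{v_i}+1)$, and match this difference with the coefficient of $K$ by the same computation as (3.17). Your observation that every exponent of $V_{M_0}$ is at most $N-2$ (each vertex has degree $N-2$ there) already settles the range bookkeeping you flag at the end, and it also makes the paper's extension of $\kappa$ by zero at argument $N$ unnecessary.
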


\begin{proof}
Write $c(\delta)=[z^\delta]V_{M_0}$.  On the left side of (3.15), the
coefficient of $x^Ay^B$ in $K$ is $1/\bigl(A(A-1)\bigr)$ when $A+B=p+1$
and $2\le A,B\le N$, and zero otherwise; extracting
$\prod_iz_i^N$ therefore pairs $c(\delta)$ with the kernel coefficients
at $(A,B)=(N-\delta_u,\,N-\delta_v)$, so only exponent vectors with
\[
                         \delta_u+\delta_v=N-2             \tag{3.16}
\]
on each block occur.

On the right side, use $V=V_{M_0}\prod_i(z_{u_i}-z_{v_i})$ and expand
the last product.  For a fixed block $\{u,v\}$, the two expansion terms
shift the exponent of $z_u$ or $z_v$ by one, so after reindexing the
$\gamma$-sum by $\delta$ the block contributes
\[
 \kappa(\delta_u+1,\delta_v)-
 \kappa(\delta_u,\delta_v+1).
\]
If $\delta_u=N-1$ (or symmetrically $\delta_v=N-1$), one displayed
argument of $\kappa$ is $N$, just outside the range in (3.14); we extend
$\kappa$ by zero there.  This causes no new term, since the other
exponent would have to be $-1$.  With this harmless convention, the
displayed difference vanishes unless (3.16) holds; on that locus it equals
\[
 -\frac1{\delta_u+2}+\frac1{\delta_u+1}
 =\frac1{(\delta_u+1)(\delta_u+2)}
 =\frac1{(N-\delta_u)(N-1-\delta_u)},                     \tag{3.17}
\]
because $N\equiv-1\pmod{p}$.  This is precisely the coefficient of
$z_u^{N-\delta_u}z_v^{N-\delta_v}$ in $K(z_u,z_v)$.  Multiplying over
the $m$ blocks and summing against $c(\delta)$ proves (3.15).
\end{proof}

The full Vandermonde has one nonzero coefficient for each permutation of
the exponents $0,\ldots,N-1$, and that coefficient is its sign, up to one
fixed global sign.  Thus (3.15) is an alternating sum over assignments of
exponent labels to the vertices.  Pairing the assignments that interchange
the two vertices of a block shows that only the antisymmetric part of
$\kappa$ can contribute.  This is precisely the Pfaffian mechanism.

Define the antisymmetric $N\times N$ matrix
\[
                         A_{\alpha\beta}
  =\frac12\bigl(\kappa(\alpha,\beta)-\kappa(\beta,\alpha)\bigr),
 \qquad 0\le\alpha,\beta<N.                               \tag{3.18}
\]

\begin{lemma}[Pfaffian form]\label{lem:pf-form}
\[
                         T=(-1)^m2^m m!\,\Pf(A).             \tag{3.19}
\]
\end{lemma}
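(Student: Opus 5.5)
We start from (3.15) and write the Vandermonde coefficients explicitly. The nonzero coefficients of $V(z)=\prod_{i<j}(z_i-z_j)$ sit at the vectors $\gamma=\sigma^{-1}$ for bijections $\sigma:\{0,\ldots,N-1\}\to V$ of exponent labels onto vertices, with $\gamma_v=\sigma^{-1}(v)$. The coefficient is $\eta\,\sgn(\sigma)$ for a fixed global sign $\eta$; with the convention $z_0^{N-1}z_1^{N-2}\cdots$ for the leading term, $\eta$ is the sign of the reversal permutation.

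Thus (3.15) becomes
\[
T=\eta\sum_{\sigma}\sgn(\sigma)\prod_{i=1}^m\kappa\bigl(\sigma^{-1}(u_i),\sigma^{-1}(v_i)\bigr).
\]
Equivalently, we sum over ordered lists of $m$ pairs $(\alpha_i,\beta_i)$ of labels partitioning $\{0,\ldots,N-1\}$, where $(\alpha_i,\beta_i)$ is assigned to the block $(u_i,v_i)$. Because $M_0$ is the consecutive matching (3.1), the vertex sequence $u_1,v_1,\ldots,u_m,v_m$ is just $0,1,\ldots,N-1$. Hence $\sgn(\sigma)$ is the sign of the word $\alpha_1\beta_1\alpha_2\beta_2\cdots\alpha_m\beta_m$ as a permutation of $0,\ldots,N-1$.

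The next step is the standard symmetrization. The summand for a fixed block is $\kappa(\alpha,\beta)$. Swapping $\alpha_i\leftrightarrow\beta_i$ flips $\sgn$, so summing each pair with its swap replaces $\kappa$ by $\kappa(\alpha,\beta)-\kappa(\beta,\alpha)=2A_{\alpha\beta}$ and restricts the sum to $\alpha_i<\beta_i$. Permuting the $m$ blocks among themselves moves two-letter blocks, which is an even operation, and leaves the product unchanged. Hence the sum over ordered lists equals $m!$ times the sum over unordered partitions into pairs $\{\alpha<\beta\}$. Those terms are
\[
\sgn(\alpha_1\beta_1\cdots)\prod_i 2A_{\alpha_i\beta_i}=2^m\,\varepsilon(\text{matching})\prod_i A_{\alpha_i\beta_i},
\]
and their sum is exactly $2^m\Pf(A)$ by the Pfaffian expansion recalled in Section~2. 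This gives $T=\eta\,2^m m!\,\Pf(A)$. All divisions here are by $2$ and by integers less than $p$, so everything is legitimate over $\F_p$, where the paper's computations take place.

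The main obstacle is pinning down the global sign $\eta$ and checking that it equals $(-1)^m$. The coefficient of $z_0^{N-1}z_1^{N-2}\cdots z_{N-1}^0$ in $\prod_{i<j}(z_i-z_j)$ is $+1$. In the labelling above, this term corresponds to $\sigma(k)=N-1-k$, the reversal of $N$ letters, whose sign is $(-1)^{\binom{N}{2}}=(-1)^{m(2m-1)}=(-1)^m$. So $[z^{\gamma}]V=(-1)^m\sgn(\sigma)$, that is, $\eta=(-1)^m$, which yields (3.19).

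I would double-check this convention carefully: whether $\sgn$ refers to $\sigma$ or $\sigma^{-1}$ is harmless, since the two have the same sign, but the orientation of the word (labels read along vertices) must match the Pfaffian sign convention $\varepsilon(M)$ used in Section~2. The consecutiveness of $M_0$ is exactly what makes this identification clean.
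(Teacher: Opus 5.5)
Your proof is correct and follows the paper's argument. You identify $[z^\gamma]V=(-1)^m\sgn(\sigma)$ through the reversal permutation, antisymmetrize block by block with the swaps $\alpha_i\leftrightarrow\beta_i$ (the paper's pairing $\rho\leftrightarrow\rho\circ(u_i\,v_i)$), and then recognize the result as the Pfaffian. The only difference is that the paper quotes the permutation expansion $\Pf(A)=\frac{1}{2^m m!}\sum_\rho\sgn(\rho)\prod_i A_{\rho(u_i),\rho(v_i)}$, while you rederive it from the matching expansion, using that block permutations are even and that the word sign with increasing pairs is the crossing sign $\varepsilon$.
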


\begin{proof}
The coefficient $[z^\gamma]V$ vanishes unless
$\gamma=(\rho(0),\ldots,\rho(N-1))$ for a permutation $\rho$ of
$\{0,1,\ldots,N-1\}$.  With the convention (3.13), the expansion
$V=\det\bigl(z_i^{N-1-j}\bigr)_{i,j}$ shows that the coefficient of the
decreasing assignment $\gamma_i=N-1-i$ is $+1$; passing from the
decreasing to the increasing enumeration of exponents costs the sign of
the order-reversing permutation, which is
$(-1)^{\binom{N}{2}}=(-1)^{m(p-2)}=(-1)^m$.  Hence
\[
                         [z^\gamma]V=(-1)^m\sgn(\rho).
\]
Substituting into (3.15),
\[
 T=(-1)^m\sum_{\rho\in\mathfrak{S}_N}\sgn(\rho)
      \prod_{i=1}^m\kappa\bigl(\rho(u_i),\rho(v_i)\bigr).
\]
We antisymmetrize one block at a time.  For a fixed $i$, partition the
permutations into pairs
$\{\rho,\rho\circ(u_i\,v_i)\}$.  The two permutations have opposite
signs, and composition with $(u_i\,v_i)$ leaves
$\rho(u_j),\rho(v_j)$ unchanged for every $j\ne i$.  Averaging within
each pair therefore replaces the $i$th kernel by its antisymmetric part
without changing the sum.  Iterating this argument over
$i=1,\ldots,m$ gives
\[
 \sum_{\rho\in\mathfrak{S}_N}\sgn(\rho)
      \prod_{i=1}^m\kappa\bigl(\rho(u_i),\rho(v_i)\bigr)
 =
 \sum_{\rho\in\mathfrak{S}_N}\sgn(\rho)
      \prod_{i=1}^m A_{\rho(u_i),\rho(v_i)}.
\]
The standard permutation expansion
\[
 \Pf(A)=\frac1{2^m m!}
 \sum_{\rho\in\mathfrak{S}_N}\sgn(\rho)
      \prod_{i=1}^m A_{\rho(u_i),\rho(v_i)}
\]
now gives (3.19).
\end{proof}

It remains to evaluate this Pfaffian.  The special choice of $\kappa$ now
pays off: $A$ is supported on one antidiagonal, so its Pfaffian has only one
nonzero matching term.

\begin{lemma}[single antidiagonal]\label{lem:single-antidiagonal}
\[
                         \Pf(A)=\frac{(-1)^m}{m!}.          \tag{3.20}
\]
\end{lemma}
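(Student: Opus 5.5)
The plan is to read off the entries of $A$ directly from (3.14) and (3.18), observe that the support of $A$ forces a unique perfect matching in the Pfaffian expansion, and then multiply the surviving entries. All computations are over $\F_p$, as in the rest of this subsection.

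\emph{Step 1: locate the support of $A$.} By (3.14), $\kappa(\alpha,\beta)\ne0$ only when $\alpha+\beta=p-2=N-1$. This condition is symmetric in $\alpha$ and $\beta$, so $A_{\alpha\beta}$ can be nonzero only on the antidiagonal $\alpha+\beta=N-1$. Since $N$ is even, no index is paired with itself there.

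\emph{Step 2: compute the antidiagonal entries.} For $0\le\alpha\le m-1$, put $\beta=N-1-\alpha$. Then
\[
 A_{\alpha\beta}=\tfrac12\Bigl(-\tfrac1{\alpha+1}+\tfrac1{\beta+1}\Bigr)
 =\tfrac12\Bigl(-\tfrac1{\alpha+1}+\tfrac1{N-\alpha}\Bigr).
\]
Because $N\equiv-1\pmod p$, we have $N-\alpha\equiv-(\alpha+1)$. Both terms therefore equal $-1/(\alpha+1)$, and
\[
 A_{\alpha,N-1-\alpha}=-\frac1{\alpha+1}.
\]
The denominators are units, since $1\le\alpha+1\le m<p$.

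\emph{Step 3: identify the single matching.} In the expansion $\Pf(A)=\sum_M\varepsilon(M)\prod_{\{\alpha,\beta\}\in M}A_{\alpha\beta}$, each $\alpha$ must be matched to $N-1-\alpha$, its only possible partner. So the only contributing matching is $M^\ast=\{\{\alpha,N-1-\alpha\}:0\le\alpha<m\}$. Its arcs are pairwise nested, since $\alpha<\alpha'<N-1-\alpha'<N-1-\alpha$. No two arcs cross, so $\varepsilon(M^\ast)=1$. Hence
\[
 \Pf(A)=\prod_{\alpha=0}^{m-1}\Bigl(-\frac1{\alpha+1}\Bigr)=\frac{(-1)^m}{m!},
\]
which is (3.20).

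There is no real obstacle. The only points needing care are the Wilson-type reduction $1/(N-\alpha)\equiv-1/(\alpha+1)$ in Step 2 and the check that the nested matching $M^\ast$ carries Pfaffian sign $+1$ in Step 3.
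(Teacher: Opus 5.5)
Your proposal is correct and follows essentially the same argument as the paper. You use $\beta+1\equiv-(\alpha+1)$ to get $A_{\alpha,N-1-\alpha}=-1/(\alpha+1)$, note that the support forces the unique nested matching $\{\alpha,N-1-\alpha\}$ with sign $+1$, and multiply the entries. (Your ``Wilson-type'' reduction is really just $N\equiv-1\pmod p$, but the step is right.)
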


\begin{proof}
If $\alpha+\beta=p-2$, then $\beta+1\equiv-(\alpha+1)\pmod{p}$, so
$\kappa(\beta,\alpha)=-1/(\beta+1)\equiv1/(\alpha+1)$ and
\[
 A_{\alpha\beta}=\frac12\left(-\frac1{\alpha+1}-\frac1{\alpha+1}\right)
               =-\frac1{\alpha+1};
\]
all other entries vanish.  Thus $A$ is supported on the single
antidiagonal $\alpha+\beta=N-1$, and the only perfect matching of
$\{0,\ldots,N-1\}$ inside this support is
\[
 \{0,N-1\},\{1,N-2\},\ldots,\{m-1,m\}.
\]
For example, when $p=5$, these pairs are $\{0,3\}$ and $\{1,2\}$.
These pairs are nested, so no two arcs cross and the Pfaffian sign of
this matching is $+1$.  Therefore
\[
 \Pf(A)=\prod_{\alpha=0}^{m-1}\left(-\frac1{\alpha+1}\right)
       =\frac{(-1)^m}{m!}.
\]
\end{proof}

Combining Lemmas~\ref{lem:pf-form} and
\ref{lem:single-antidiagonal} gives $T=2^m$.  Equation (3.12) therefore
proves the congruence
\[
                         \Theta_{p-1}=(-2)^m\quad\text{in }\F_p.
                                                               \tag{3.21}
\]

\begin{proof}[Proof of Theorem~\ref{thm:intro-pminus1}]
The residue in (3.21) is nonzero.  By (2.1),
\[
              \Theta_{p-1}=(p-2)!\,S_{p-1}.
\]
Wilson's theorem gives $(p-2)!\equiv1\pmod{p}$, so
$S_{p-1}\equiv(-2)^m\not\equiv0\pmod{p}$; in particular $S_{p-1}$ is a
nonzero integer.  The complete graph $K_{p-1}$ is one-factorable, so
$\chi'(K_{p-1})=p-2$; Proposition~\ref{prop:coefficient-bridge} gives the
remaining equalities:
\[
 \chi'(K_{p-1})=\chi'_{\ell}(K_{p-1})
                =\chi'_{\mathrm P}(K_{p-1})=p-2.
\]
Euler's criterion and the preceding Wilson congruence now give
\[
                         S_{p-1}\equiv(-2)^m
                         =\left(\frac{-2}{p}\right)\pmod p,
\]
as required.
\end{proof}

\section{Complete graphs of order
\texorpdfstring{$2p$}{2p}}

Here $(2p-1)!$ is divisible by $p$, so the ordered Pfaffian
coefficient no longer determines $S_{2p}$ modulo $p$.  We instead
evaluate the unordered signed count by a translation action on two
$p$-vertex layers.

We now fix an odd prime $p$ and write the vertices as two layers
\[
 V=V_0\sqcup V_1,
 \qquad V_i=\F_p\times\{i\},
\]
ordered with all of $V_0$ (in the order of $\F_p$) before all of $V_1$.
The group $\Gamma=\F_p^2$ acts by independent translations,
\[
 (a,b):(x,0)\mapsto(x+a,0),
 \qquad (y,1)\mapsto(y+b,1).                              \tag{4.1}
\]
By Lemma~\ref{lem:sign-invariance}, the weight
$\varepsilon(\mathcal F)$ is constant on $\Gamma$-orbits.

We use the signed form of Burnside's lemma.

\begin{lemma}[weighted Burnside]\label{lem:weighted-burnside}
Let a finite group $G$ act on a finite set $\mathcal X$, and suppose
$w:\mathcal X\to\{\pm1\}$ is constant on orbits.  Set
\[
                         W(g)=\sum_{x\in\Fix(g)}w(x).
\]
Then
\[
 \sum_{g\in G}W(g)=|G|\sum_{\mathcal O\in\mathcal X/G}w(\mathcal O).
                                                               \tag{4.2}
\]
In particular the left side is divisible by $|G|$.
\end{lemma}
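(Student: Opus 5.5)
The plan is a double count of the set of pairs $\Omega=\{(g,x)\in G\times\mathcal X: gx=x\}$, weighted by $w(x)$.

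First, sum over the group. Fixing $g$ and summing $w(x)$ over $x\in\Fix(g)$ gives $W(g)$, so the weighted count of $\Omega$ equals the left side of (4.2).

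Second, sum over the set. Fixing $x$, the elements $g$ with $gx=x$ form the stabilizer $G_x$. The weighted count of $\Omega$ is therefore $\sum_{x\in\mathcal X}|G_x|\,w(x)$. Now group the points by orbits. For $x$ in an orbit $\mathcal O$, the orbit--stabilizer theorem gives $|G_x|=|G|/|\mathcal O|$. Since $w$ is constant on orbits, $w(x)=w(\mathcal O)$ for all $x\in\mathcal O$. Hence each orbit contributes
\[
 \sum_{x\in\mathcal O}\frac{|G|}{|\mathcal O|}\,w(\mathcal O)=|G|\,w(\mathcal O).
\]
Summing over orbits yields the right side of (4.2).

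Divisibility then follows at once. Each $w(\mathcal O)$ is an integer, namely $\pm1$, so the right side of (4.2) is $|G|$ times an integer.

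There is no real obstacle here. The only point needing care is the hypothesis that $w$ is constant on orbits; without it the orbit grouping in the second step fails. In the application, this hypothesis is supplied by Lemma~\ref{lem:sign-invariance}.
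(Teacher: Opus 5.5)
Your proposal is correct and follows the paper's proof: both count the pairs $(g,x)$ with $gx=x$, weighted by $w(x)$, and use orbit--stabilizer to show that each orbit contributes exactly $|G|\,w(\mathcal O)$. Your version simply spells out the two summation orders that the paper's one-line argument leaves implicit.
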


\begin{proof}
For an orbit $\mathcal O$, the number of pairs $(g,x)$ with
$x\in\mathcal O$ and $gx=x$ is
$|\mathcal O|\,|\operatorname{Stab}(x)|=|G|$.  Sum orbit by orbit.
\end{proof}

Let $F_{\rm diag}$ be the signed number of one-factorizations fixed by a
nontrivial diagonal translation $(t,t)$.

\begin{lemma}[axis vanishing]\label{lem:axis-vanishing}
No one-factorization is fixed by a nontrivial translation on just one
layer.
\end{lemma}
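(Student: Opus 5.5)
Consider a nontrivial translation $g=(a,0)$ with $a\neq0$; the case $(0,b)$ is symmetric. We show that no one-factorization $\mathcal F$ of $K_{2p}$ satisfies $g\mathcal F=\mathcal F$. The argument tracks the edges inside the first layer, because $g$ moves them freely, and counts how many of them each matching can contain.

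First, suppose $g\mathcal F=\mathcal F$. Then $g$ permutes the $2p-1$ perfect matchings of $\mathcal F$. The element $g$ has order $p$, so every orbit of this permutation has size $1$ or $p$. Since $2p-1=p+(p-1)$, there is at least one matching $M\in\mathcal F$ fixed by $g$: the orbit sizes sum to $2p-1$, which is not a multiple of $p$.

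Next, we rule out fixed matchings using the action on $V_0\cup V_1$. The map $g$ translates $V_0$ cyclically, as a single $p$-cycle because $a\neq0$, and it fixes every vertex of $V_1$. Take a vertex $(y,1)\in V_1$, and let $w$ be its partner in $M$. Since $gM=M$ and $g$ fixes $(y,1)$, the partner $g(w)$ of $g(y,1)=(y,1)$ must equal $w$. So $w$ is a fixed point of $g$, which forces $w\in V_1$.

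Hence $M$ matches $V_1$ to itself. This is impossible, since $|V_1|=p$ is odd. Equivalently, $M$ would have to match $V_0$ within itself, and a translation-invariant matching of the odd set $V_0$ cannot exist either.

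So no matching of $\mathcal F$ is fixed by $g$. This contradicts the orbit count above, and therefore $\Fix(g)=\varnothing$.

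The only step needing care is the orbit-size count that produces a fixed matching. It relies on $p\nmid 2p-1$ and on $\mathcal F$ being a set of distinct matchings, which is automatic because they are disjoint. Once a fixed matching exists, the parity obstruction from $|V_1|=p$ being odd is immediate. The same reasoning applies verbatim to $(0,b)$ with the roles of the layers swapped.
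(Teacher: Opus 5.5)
Your proof is correct and follows the paper's argument: the orbit count on the $2p-1$ factors forces a $g$-invariant factor, and invariance at a fixed vertex of $V_1$ forces its partner to be fixed, so that factor would perfectly match the odd layer $V_1$. The paper phrases the second step as ruling out cross edges (their $p$ translates would all meet the same fixed vertex), which is the same idea as your partner argument.
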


\begin{proof}
Consider $(t,0)$ with $t\ne0$.  If an invariant perfect matching
contained a cross edge $\{(x,0),(y,1)\}$, its $p$ translates would all
contain the fixed vertex $(y,1)$, giving it degree $p$ in a matching,
which is impossible.  Hence every edge of an invariant matching would
lie within a layer.  But then each layer would be perfectly matched
within itself, and the layers have odd size $p$.  Thus there is no
invariant perfect matching at all.

On the other hand, if a one-factorization were fixed by $(t,0)$, then
the induced action on its $2p-1$ factors would have all orbits of size
$1$ or $p$, so the number of fixed factors would be congruent to
$2p-1\equiv-1\pmod{p}$; in particular some factor would be an invariant
perfect matching.  This contradiction proves the lemma; the other axis
is symmetric.
\end{proof}

Within the normalizer of $\Gamma$, every $(a,b)$ with $a,b\ne0$ is
conjugate to a diagonal translation: rescale the two layers independently
by $x\mapsto a^{-1}x$ and $y\mapsto b^{-1}y$.  By
Lemma~\ref{lem:sign-invariance}, all $(p-1)^2$ such elements have the
same signed trace $F_{\rm diag}$.  Lemma~\ref{lem:weighted-burnside}
together with Lemma~\ref{lem:axis-vanishing} gives
\[
                  S_{2p}+(p-1)^2F_{\rm diag}\equiv0\pmod{p^2}.
                                                               \tag{4.3}
\]
It remains to determine $F_{\rm diag}\pmod{p^2}$.

\subsection{Diagonal fixed points and cyclic starters}

For $d\in\F_p$, define the invariant cross matching
\[
 M_d=\bigl\{\{(x,0),(x+d,1)\}:x\in\F_p\bigr\}.            \tag{4.4}
\]
For a matching on an even subset of the ordered vertex set, define its
sign by counting crossings in the induced order; this sign is again
denoted by $\varepsilon$.
A \emph{starter} in $\F_p$ is a perfect matching $A$ of
$\F_p\setminus\{0\}$ whose $m=(p-1)/2$ undirected differences represent
all classes in $\F_p^\times/\{\pm1\}$, where the difference of
$\{a,b\}$ is the class $\{\pm(a-b)\}$.  Equivalently, the $p$
translates of $A$ partition the edges of $K_p$ into near-perfect
matchings.
For example, $\{\{1,4\},\{2,3\}\}$ is a starter in $\F_5$.

\begin{lemma}[classification]\label{lem:diag-classification}
Every diagonally invariant one-factorization of $K_{2p}$ is obtained
uniquely from the following data:
\begin{enumerate}[label=\textup{(\roman*)}]
\item a residue $d_0\in\F_p$;
\item the $p-1$ fixed matchings $M_d$ with $d\ne d_0$;
\item an ordered pair $(A,B)$ of starters;
\item the $p$ translates of the factor consisting of the cross
edge $\{(0,0),(d_0,1)\}$, the starter $A$ in $V_0$, and the translate
$B+d_0$ in $V_1$.
\end{enumerate}
\end{lemma}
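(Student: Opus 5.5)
Let $\sigma=(1,1)$ be the diagonal translation; a nontrivial diagonal translation $(t,t)$ generates the same group. The plan is to classify the $\sigma$-invariant factors and then count them. Let $\mathcal F$ be a one-factorization of $K_{2p}$ fixed by $\sigma$. Then $\sigma$ permutes the $2p-1$ factors, and every orbit has size $1$ or $p$. Hence the number of $\sigma$-invariant factors is $\equiv -1\pmod p$, so it is $p-1$ or $2p-1$.

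\emph{Invariant factors.} Let $M$ be a $\sigma$-invariant perfect matching. If $M$ had an edge inside $V_0$, its $p$ translates would form a $\sigma$-invariant set of edges inside $V_0$. The only invariant matchings inside a layer of odd size are empty, so this is impossible, and every edge of $M$ is a cross edge. The cross edges form $p$ orbits, namely the matchings $M_d$, and each $M_d$ is already perfect. Thus the invariant perfect matchings are exactly $M_0,\dots,M_{p-1}$. There are only $p$ of them, so $\mathcal F$ contains exactly $p-1$ invariant factors, and these are the matchings $M_d$ with $d\ne d_0$ for a unique residue $d_0$. This gives data (i) and (ii).

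\emph{The remaining orbit.} The other $p$ factors form a single $\sigma$-orbit $\{\sigma^jF:j\in\F_p\}$. Together these factors cover every edge inside the layers and the cross matching $M_{d_0}$. Since $M_{d_0}$ is one $\sigma$-orbit of $p$ edges distributed among $p$ factors, $F$ contains exactly one of these cross edges. After choosing the representative $F$ in its orbit, we may assume this edge is $\{(0,0),(d_0,1)\}$, and this normalization fixes $F$ uniquely. Then $F$ restricted to $V_0$ is a perfect matching $A$ of $\F_p\setminus\{0\}$. Its translates $A+j$ must partition $E(K_p)$ on $V_0$, and this happens exactly when the differences of $A$ represent every class of $\F_p^\times/\{\pm1\}$ once, that is, when $A$ is a starter. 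In the same way, $F$ restricted to $V_1$ is a perfect matching of $\F_p\setminus\{d_0\}$. Writing it as $B+d_0$, the matching $B$ is a starter.

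\emph{Converse.} Given data (i)--(iv), the factor $F$ is a perfect matching. Its $p$ translates are pairwise edge-disjoint and cover $E(V_0)$, $E(V_1)$ and $M_{d_0}$. Together with the $M_d$ for $d\ne d_0$, they therefore give a one-factorization, which is $\sigma$-invariant by construction.

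\emph{Uniqueness.} The residue $d_0$ is recovered as the unique $d$ with $M_d\notin\mathcal F$. The normalized representative $F$ is recovered as the unique factor containing $\{(0,0),(d_0,1)\}$, and $A$ and $B$ are read off from $F$.

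The step that needs care is the counting argument showing that exactly one cross matching fails to be a factor. The case of $2p-1$ invariant factors must be excluded by the fact that only $p$ invariant perfect matchings exist, and this in turn rests on the claim that a $\sigma$-invariant matching has no edge inside a layer.
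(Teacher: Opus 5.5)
Your proof is correct and follows the paper's argument essentially step for step. The shared steps are: the count of fixed factors modulo $p$; the identification of the invariant perfect matchings with the $M_d$, which rules out $2p-1$ fixed factors; the single free orbit with exactly one edge of $M_{d_0}$ per factor; and the starter condition as edge-disjointness of the translates, normalized at the factor containing $\{(0,0),(d_0,1)\}$. The differences are cosmetic: you use that $(t,t)$ generates the same cyclic group instead of conjugating by a rescaling, and you get one cross edge per factor from $\sigma$-invariance of $M_{d_0}$ rather than from the parity obstruction on the odd layers, while your explicit converse spells out what the paper leaves implicit.
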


\begin{proof}
Fix the diagonal translation $g=(t,t)$, $t\ne0$; conjugating by a
simultaneous rescaling we may take $t=1$.  First, $g$ fixes no edge: a
setwise fixed edge would either have both endpoints fixed, impossible
since $g$ has no fixed vertices, or be swapped by $g$, in which case
$g^2\ne\mathrm{id}$ would fix both endpoints, again impossible.  A
$g$-invariant perfect matching is therefore a union of full edge orbits,
and since it has exactly $p$ edges it is a single orbit.  The orbit of
an edge inside one layer consists of $p$ edges on the $p$ vertices of
that layer and is not a matching; the orbit of a cross edge is one of
$M_0,\ldots,M_{p-1}$.  Hence the $M_d$ are precisely the invariant
perfect matchings.

Now let $\mathcal F$ be a $g$-invariant one-factorization.  The number
of $g$-fixed factors is congruent to $2p-1\equiv-1\pmod{p}$, so it is
$p-1$ or $2p-1$; the latter would require $2p-1$ pairwise disjoint
invariant matchings, while only $p$ exist.  So exactly $p-1$ of the
$M_d$ occur as factors, one $M_{d_0}$ being omitted, and the remaining
$p$ factors form a single free orbit.  Their union is the complement of
the chosen fixed factors, namely the two layer cliques together with
$M_{d_0}$.  Each of these $p$ factors must use at least one cross edge,
because a factor with no cross edge would perfectly match each odd
layer; since only the $p$ edges of $M_{d_0}$ are available, each factor
contains exactly one of them, together with a near-perfect matching in
each layer.  Edge-disjointness of the $p$ translated factors is exactly
the condition that the layer parts be starters.  Normalizing at the
unique factor containing $\{(0,0),(d_0,1)\}$ makes the pair $(A,B)$
unique.
\end{proof}

Order each layer as $0,1,\ldots,p-1$.  For a starter $A$, let
$\varepsilon(A)$ be its Pfaffian sign as a matching of
$\F_p\setminus\{0\}$, and put
\[
                         T_p=\sum_{A\text{ starter}}\varepsilon(A).
                                                               \tag{4.5}
\]

\begin{lemma}[fixed-point sign]\label{lem:diag-sign}
The factorization associated with $(d_0,A,B)$ has sign
$\varepsilon(A)\varepsilon(B)$.  Consequently
\[
                         F_{\rm diag}=pT_p^2.              \tag{4.6}
\]
\end{lemma}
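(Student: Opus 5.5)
The plan is to compute $\varepsilon(\mathcal F)$ factor by factor, using the description of $\mathcal F$ from Lemma~\ref{lem:diag-classification}. The factors are the $p-1$ fixed matchings $M_d$ with $d\ne d_0$, together with the $p$ translates $F_t=(t,t)F_0$, where $F_0=\{(0,0),(d_0,1)\}\cup A\cup(B+d_0)$. I will show two things. First, $\prod_{d\ne d_0}\varepsilon(M_d)=1$. Second, every $F_t$ has the same sign $\varepsilon(A)\varepsilon(B)$. Since $p$ is odd, the product of the $p$ translates is then $\varepsilon(A)\varepsilon(B)$. Finally, summing over the $p$ choices of $d_0$ and the independent starters $A,B$ gives $F_{\rm diag}=p\,T_p^2$. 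The uniqueness in Lemma~\ref{lem:diag-classification} makes this sum a bijective count. We may take $t=1$ as in that proof, since by Lemma~\ref{lem:sign-invariance} the sign is unchanged by rescaling.

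\emph{Fixed matchings.} All of $V_0$ precedes $V_1$. Hence two cross arcs $\{(x,0),(x+d,1)\}$ and $\{(x',0),(x'+d,1)\}$ with $x<x'$ cross exactly when $x+d<x'+d$ in the integer order of representatives. So $\crs(M_d)=\binom p2-\operatorname{inv}(\sigma_d)$, where $\sigma_d$ is the permutation $x\mapsto x+d$ of $\{0,\dots,p-1\}$. This is a power of a $p$-cycle, hence even. Therefore $\varepsilon(M_d)=(-1)^{\binom p2}$, which is independent of $d$, and the product over the $p-1$ values $d\ne d_0$ is $1$ because $p-1$ is even.

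\emph{Free factors.} No arc inside $V_0$ crosses an arc inside $V_1$. The single cross edge $\{(t,0),(t+d_0,1)\}$ crosses exactly the layer arcs that straddle its endpoint within that layer. Hence $\varepsilon(F_t)=\bigl[\varepsilon(A+t)(-1)^{s_0}\bigr]\bigl[\varepsilon(B+d_0+t)(-1)^{s_1}\bigr]$, where $s_0$ and $s_1$ count the straddling arcs. The key tool is a rotation lemma: for a perfect matching on an even set of points, $\varepsilon$ is invariant under moving the first point to the end. Indeed, if that point is matched to $j$, the arcs it crossed before and after the move are the same set, namely the arcs with exactly one endpoint strictly between the two endpoints. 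So $\varepsilon$ depends only on the cyclic order.

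In $V_0$, I adjoin a phantom point $\omega$ at the right end, matched to $t$. Then $\varepsilon(A+t)(-1)^{s_0}$ is the sign of $(A+t)\cup\{t,\omega\}$ on $0,\dots,p-1,\omega$. Moving $\omega$ to sit immediately before $t$ changes only the crossings of the arc $\{t,\omega\}$. That arc then crosses nothing, and all other crossings are untouched. In this new cyclic order, translation by $-t$ is a cyclic rotation carrying the configuration to $A\cup\{0,\omega\}$ with $\omega$ just before $0$, whose sign is $\varepsilon(A)$. The same argument in $V_1$, with the phantom at the left end and the unmatched point $d_0+t$, gives $\varepsilon(B)$.

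The main obstacle is this bookkeeping: checking that moving the phantom exactly absorbs $(-1)^{s_i}$, and that the orientation (phantom on the right in $V_0$, on the left in $V_1$) is handled correctly. Everything else is parity counting.
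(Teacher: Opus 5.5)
\textbf{Verdict: gap.} Your final value $F_{\rm diag}=pT_p^2$ is right, but your proof passes through a false claim: not every free factor $F_t$ has sign $\varepsilon(A)\varepsilon(B)$.

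\textbf{Where it fails.} Your fixed-matching paragraph is fine; evenness of $\sigma_d$ as a power of an odd cycle replaces the paper's count of $d(p-d)$ inversions. The error is the phantom move. Before the move, the sign is $\varepsilon(A+t)(-1)^{s_0}$. Moving $\omega$ from the right end to just before $t$ is not a cyclic rotation. As you note yourself, it only changes the crossings of $\{t,\omega\}$, from $s_0$ to $0$. So the move removes the factor $(-1)^{s_0}$ rather than absorbing it. After the move the sign is $\varepsilon(A+t)$, and your rotation step proves only $\varepsilon(A+t)=\varepsilon(A)$, which is the paper's (4.7). It does not prove $\varepsilon(A+t)(-1)^{s_0}=\varepsilon(A)$.

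The points to the left of the missing vertex $t$ are matched either in pairs or across $t$, so $s_0\equiv t$. Likewise $s_1\equiv b_t$, where $b_t=(t+d_0)\bmod p$. The correct factor sign is therefore $(-1)^{t+b_t}\varepsilon(A)\varepsilon(B)$, which genuinely depends on $t$.

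\textbf{Counterexample.} Take $p=3$, $A=B=\{\{1,2\}\}$ (so $\varepsilon(A)\varepsilon(B)=1$), and $d_0=1$, with layer positions $0,1,2$ and $3,4,5$. Then $F_0$ is $\{0,4\},\{1,2\},\{3,5\}$, which has exactly one crossing, so $\varepsilon(F_0)=-1$. Likewise $F_1=\{1,5\},\{0,2\},\{3,4\}$ has sign $-1$, and $F_2$ has sign $+1$.

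\textbf{The missing step.} You need the global parity count over the orbit, as in the paper. Since $t\mapsto b_t$ permutes $\{0,\ldots,p-1\}$,
\[
 \sum_{t=0}^{p-1}(t+b_t)=p(p-1)
\]
is even. The product of the $p$ factor signs is then $\bigl(\varepsilon(A)\varepsilon(B)\bigr)^p=\varepsilon(A)\varepsilon(B)$, which matches the example ($(-1)(-1)(+1)=1$). With this correction, the rest of your argument goes through: the bijective sum over $d_0$ and the ordered pairs $(A,B)$ gives $F_{\rm diag}=pT_p^2$. As written, though, the argument rests on an intermediate claim that already fails at $p=3$.
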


\begin{proof}
We compute the sign factor by factor.

\emph{Fixed factors.}  The matching $M_d$ consists of the $p$ arcs from
position $x$ in $V_0$ to position $p+((x+d)\bmod p)$ in $V_1$.  Two such
arcs, starting at $x<x'$, cross precisely when their right endpoints
appear in the same relative order, so
\[
 \crs(M_d)=\binom{p}{2}-\operatorname{inv}(\sigma_d),
\]
where $\operatorname{inv}$ denotes the number of inversions and
$\sigma_d$ is the cyclic shift by $d$.  The shift has
$d(p-d)$ inversions, where $d$ is represented in
$\{0,\ldots,p-1\}$.  This number is even since $p$ is odd, and
$\binom{p}{2}=pm\equiv m\pmod{2}$.  Hence $\varepsilon(M_d)=(-1)^m$ for
every $d$, and the $p-1$ fixed factors contribute
$(-1)^{m(p-1)}=1$ in total.

\emph{Rotation invariance.}  Two chords on a circle cross if and only
if their endpoints interleave cyclically, and cyclic interleaving is
unchanged by cutting the circle at any point.  Hence linear crossing
parity on the layer order $0,1,\ldots,p-1$ agrees with circular
crossing parity.  Therefore, for every $t\in\F_p$,
\[
 \varepsilon(A+t)=\varepsilon(A),\qquad
 \varepsilon(B+t)=\varepsilon(B).                         \tag{4.7}
\]

\emph{The free orbit.}  Consider the factor whose cross edge joins
$(t,0)$ to $(b_t,1)$, where $b_t=(t+d_0)\bmod p$; its layer parts are
$A+t$ (missing $t$) and $B+d_0+t$ (missing $b_t$).  Arcs inside $V_0$
and arcs inside $V_1$ occupy disjoint intervals and never cross each
other.  The cross edge starts at position $t$ inside $V_0$ and ends
inside $V_1$; it crosses exactly those chords of the layer parts that
straddle its endpoints.  In a near-perfect matching of
$\{0,\ldots,p-1\}$ missing $h$, the number of chords straddling $h$ has
the parity of $h$, since the $h$ vertices to the left of $h$ are matched
and those matched to the right are exactly the straddling chords.
Hence this factor has sign
\[
                         (-1)^{t+b_t}\,\varepsilon(A)\varepsilon(B),
\]
using (4.7).  Multiplying over $t=0,\ldots,p-1$ introduces no extra
sign, because
\[
 \sum_{t=0}^{p-1}t+\sum_{t=0}^{p-1}b_t=2\cdot\frac{p(p-1)}2=p(p-1)
\]
is even.  Since $p$ is odd, the free orbit contributes
$\bigl(\varepsilon(A)\varepsilon(B)\bigr)^p=\varepsilon(A)\varepsilon(B)$.

Summing over the $p$ choices of $d_0$ and the ordered pairs $(A,B)$
gives (4.6).
\end{proof}

\section{The starter sum}

Lemma~\ref{lem:diag-sign} reduces the remaining signed trace to
$pT_p^2$.  It remains to prove that $T_p$ is nonzero modulo $p$; a
skew-circulant cofactor gives its exact residue.

Let $m=(p-1)/2$ and introduce variables $y_1,\ldots,y_m$.  Let
$C=(c_{ij})_{i,j\in\F_p}$ be the skew-circulant matrix with symbol
\[
                         f(z)=\sum_{d=1}^m y_d(z^d-z^{-d}). \tag{5.1}
\]
Thus, for $i<j$ in the ordinary order,
\[
 c_{ij}=\begin{cases}
 y_{j-i},&j-i\le m,\\
 -y_{p-(j-i)},&j-i>m.
 \end{cases}                                               \tag{5.2}
\]
Let $B_p$ be the principal $(p-1)\times(p-1)$ minor obtained by deleting
the row and column indexed by $0$.

\begin{lemma}[skew-circulant cofactor]\label{lem:skew-circulant}
Over $\F_p[y_1,\ldots,y_m]$,
\[
                         \Pf(B_p)=\left(\frac{2}{p}\right)
 \left(2\sum_{d=1}^m d\,y_d\right)^{\!m}                   \tag{5.3}
\]
where $\left(\frac{\cdot}{p}\right)$ denotes the Legendre symbol.
\end{lemma}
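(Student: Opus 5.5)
The plan is to compute $\det(B_p)$ as the $(0,0)$ cofactor of $C$, using the unipotent structure of circulant matrices over $\F_p$, and then to fix the sign of the Pfaffian by one specialization. Work over the field of fractions $K=\F_p(y_1,\ldots,y_m)$. Let $S$ be the cyclic shift matrix on $\F_p$, chosen so that $C=f(S)$, where $f$ is the symbol (5.1) and $S^{-1}=S^{p-1}$ is a polynomial in $S$. Over $\F_p$ we have $z^p-1=(z-1)^p$, so $S=I+N$ with $N=S-I$ nilpotent: $N^p=0$ and $N$ is a single Jordan block. Put $z=1+w$ and expand $z^{d}-z^{-d}=(1+w)^d-(1+w)^{-d}$ as a power series truncated at $w^p$. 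The linear term is $2d\,w$, so
\[
 f(1+w)=\lambda w\,u(w),\qquad \lambda=2\sum_{d=1}^m d\,y_d,
\]
where $u(w)\in K[w]/(w^p)$ has constant term $1$. When $\lambda\ne0$ this requires dividing the remaining coefficients by $\lambda$, which is fine in $K$. Hence $C=\lambda\,N\,u(N)$, and $U=u(N)$ is unipotent, so $\det U=1$.

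Next compute the adjugate. Since $\operatorname{adj}(XY)=\operatorname{adj}(Y)\operatorname{adj}(X)$, we get
\[
 \operatorname{adj}(C)=\lambda^{p-1}\,\operatorname{adj}(U)\operatorname{adj}(N)
 =\lambda^{p-1}\,U^{-1}\operatorname{adj}(N).
\]
The matrix $N=S-I$ has rank $p-1$, and its kernel and cokernel are both spanned by the all-ones vector. Therefore $\operatorname{adj}(N)=c\,J$, where $J$ is the all-ones matrix. Because $NJ=JN=0$, we have $UJ=J$, hence also $U^{-1}J=J$. This gives $\operatorname{adj}(C)=c\lambda^{p-1}J$. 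The constant $c$ is the $(0,0)$ cofactor of $S-I$. Deleting row and column $0$ leaves a triangular matrix with diagonal entries $-1$, so $c=(-1)^{p-1}=1$. Thus
\[
 \det(B_p)=\operatorname{adj}(C)_{00}=\lambda^{2m}.
\]

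Since $\Pf(B_p)^2=\det(B_p)=\lambda^{2m}$ in the UFD $\F_p[y_1,\ldots,y_m]$, and $\lambda$ is a nonzero linear form (its $y_1$ coefficient is $2\neq0$), it follows that $\Pf(B_p)=\eta\lambda^m$ for a constant $\eta\in\{\pm1\}$. To find $\eta$, specialize $y_1=1$ and $y_d=0$ for $d\ge2$. By (5.2) the only surviving entries of $B_p$ are $c_{i,i+1}=1$ for consecutive $i,i+1\in\{1,\ldots,p-1\}$. The wraparound entry $c_{0,p-1}$ lies in the deleted row and column. So $B_p$ is the weighted path on $1,\ldots,p-1$, whose only perfect matching is the noncrossing consecutive one, and $\Pf=1$. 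On the other side, $\lambda^m=2^m$. By Euler's criterion $2^m\equiv\left(\frac2p\right)$, so $\eta=\left(\frac2p\right)$, which proves (5.3).

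The main obstacle I expect is the adjugate step. One has to check that the factorization $C=\lambda N U$ with $U$ unipotent is legitimate over $K$; it is, since $u$ is a genuine polynomial in $N$ with $u(0)=1$. One also has to confirm the rank-one form $\operatorname{adj}(N)=J$ together with the normalization $c=1$, including the orientation convention of $S$, so that no stray global sign enters. A sign error there would be harmless, because the final specialization fixes $\eta$ independently.
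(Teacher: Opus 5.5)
Your proof is correct. It reaches $\det B_p=\lambda^{2m}$ by a genuinely different route from the paper, although the final sign-fixing specialization is the same.

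\textbf{The paper's route.} The paper works in characteristic zero. It diagonalizes the circulant over $\mathbb Q(\omega)$ and argues that generically $\ker C=\langle\mathbf 1\rangle$, so $\operatorname{adj}C$ is a multiple of $J$. This yields the exact identity $\det B_p=\frac1p\prod_{k=1}^{p-1}f(\omega^k)=\prod_{k=1}^{p-1}g(\omega^k)$ with $g=f/(z-1)$. The polynomial nature of both sides then extends it off the generic locus. Only at the end does the paper reduce modulo the prime $(1-\omega)$ above $p$, where every $\omega^k$ becomes $1$ and $g(1)=f'(1)$.

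\textbf{Your route.} You stay in $\F_p$ from the start. Since $z^p-1=(z-1)^p$ there, the shift is $I+N$ with $N$ a single nilpotent Jordan block. Hence $C=f(I+N)$ depends only on the expansion of $f$ at $z=1$. The factorization $C=\lambda NU$ with $U$ unipotent, together with $\operatorname{adj}(N)=J$, then gives $\operatorname{adj}(C)=\lambda^{p-1}J$ directly. You do not even need to divide by $\lambda$. Write $C=Nh(N)$ with $h(N)-\lambda I$ nilpotent. Then $h(N)J=\lambda J$ and $\operatorname{adj}(h(N))h(N)=\lambda^pI$, so $\operatorname{adj}(h(N))J=\lambda^{p-1}J$ follows by cancelling the nonzero $\lambda$ in the domain $\F_p[y_1,\ldots,y_m]$.

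\textbf{What each buys.} Your argument is more elementary: it uses no cyclotomic integers, no genericity-and-extension step, and no identity $\prod(\omega^k-1)=p$. It also shows structurally why $f'(1)$ appears: in characteristic $p$ the whole spectrum of the shift collapses to the single eigenvalue $1$, so only the linear Taylor coefficient of the symbol survives in the cofactor. The paper's route proves more than is needed, namely an integral identity exhibiting $\det B_p$ as a norm from $\mathbb Q(\omega)$. Equation (5.6) is only its reduction modulo $p$.
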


\begin{proof}
Regard the integer polynomial matrix $C$ as a matrix over
$\mathbb Q(\omega)[y_1,\ldots,y_m]$, where $\omega$ is a primitive
$p$th root of unity.  Its eigenvalues are $f(\omega^k)$,
$0\le k<p$, and $f(1)=0$.  Generically its kernel is exactly
$\langle\mathbf{1}\rangle$: after the specialization $y_1=1$ and
$y_d=0$ for $d>1$, one has
$f(\omega^k)=\omega^k-\omega^{-k}\ne0$ for $k\ne0$.  On this generic
locus, $C\cdot\operatorname{adj}C=(\det C)I=0$, so every column of
$\operatorname{adj}C$ lies in this kernel.  Since
$\operatorname{adj}C$ is again circulant, it equals $cJ$, where $J$ is
the all-ones matrix and $c$ is a scalar.  Taking traces gives
\[
 pc=\operatorname{tr}(\operatorname{adj}C)
    =\prod_{k=1}^{p-1}f(\omega^k),
\]
the elementary symmetric function of the nonzero eigenvalues.  Every
principal cofactor of $C$ is therefore
\[
                         \det B_p=\frac1p\prod_{k=1}^{p-1}f(\omega^k).
                                                               \tag{5.4}
\]
Both sides of (5.4) are polynomial functions of the variables $y_d$,
so the identity extends from the generic locus to every specialization.
Put $g(z)=f(z)/(z-1)$, a Laurent polynomial since $f(1)=0$.  Since
$\prod_{k=1}^{p-1}(\omega^k-1)=p$, (5.4) becomes
\[
                         \det B_p=\prod_{k=1}^{p-1}g(\omega^k).  \tag{5.5}
\]
Both sides have integer coefficients as polynomials in the $y_d$.
Reduce coefficientwise modulo the cyclotomic prime $(1-\omega)$ lying
above $p$.  The quotient is
$\mathbb Z[\omega]/(1-\omega)\cong\F_p$, so every $\omega^k$ becomes
$1$, while
\[
                         g(1)=f'(1)=2\sum_{d=1}^m d\,y_d.
\]
Thus
\[
 \det B_p=\Pf(B_p)^2
       =\left(2\sum_{d=1}^m d\,y_d\right)^{\!p-1}
       \quad\text{in }\F_p[y_1,\ldots,y_m].               \tag{5.6}
\]
This polynomial ring is an integral domain and $p$ is odd, so the two
square roots in (5.6) differ by a sign independent of the variables.
It remains to determine that sign.  Set $y_1=1$ and $y_d=0$ for
$d>1$.  Then $B_p$ is the skew adjacency matrix of the path
$1,2,\ldots,p-1$, with $+1$ immediately above the diagonal.  Its only
perfect matching is
$\{1,2\},\{3,4\},\ldots,\{p-2,p-1\}$, and hence its Pfaffian is $1$.
The polynomial $\bigl(2\sum d\,y_d\bigr)^m$ specializes to $2^m$.
Euler's criterion gives $2^m\equiv\left(\frac{2}{p}\right)\pmod p$;
since this value is its own inverse, the constant sign in (5.6) is
$\left(\frac{2}{p}\right)$.
\end{proof}

\begin{lemma}[starter sum]\label{lem:starter-sum}
\[
                         T_p\equiv-\left(\frac{2}{p}\right)\pmod{p}.
                                                               \tag{5.7}
\]
In particular, $T_p^2\equiv1\pmod p$.
\end{lemma}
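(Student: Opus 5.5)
The idea is to read $T_p$ off as a single coefficient of the identity in Lemma~\ref{lem:skew-circulant}. On one side we expand $\Pf(B_p)$ combinatorially; on the other we expand $\bigl(\frac2p\bigr)\bigl(2\sum_d d\,y_d\bigr)^m$ by the multinomial theorem. We then compare the coefficients of the squarefree monomial $y_1y_2\cdots y_m$.

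\emph{Pfaffian side.} By definition,
$\Pf(B_p)=\sum_M\varepsilon(M)\prod_{\{i,j\}\in M,\,i<j}c_{ij}$, where $M$ runs over perfect matchings of $\{1,\ldots,p-1\}$. By (5.2), each $c_{ij}$ is $\pm y_d$, where $d\in\{1,\ldots,m\}$ represents the class $\pm(j-i)$. The sign is $-$ exactly when the edge is \emph{long}, meaning $j-i>m$. The product therefore equals $\pm y_1\cdots y_m$ precisely when the $m$ edges of $M$ realize every difference class once, that is, when $M$ is a starter. Hence
\[
 [y_1\cdots y_m]\Pf(B_p)=\sum_{A\text{ starter}}\varepsilon(A)(-1)^{\ell(A)},
\]
where $\ell(A)$ is the number of long edges of $A$.

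The next step is to show that $(-1)^{\ell(A)}$ does not depend on $A$. For an edge with $i<j$, the integer $j-i$ equals either $d$ or $p-d$. These two values have opposite parity because $p$ is odd, so
\[
 \ell(A)\equiv\sum_{\{i,j\}\in A}(j-i)-\sum_{d=1}^m d \pmod 2 .
\]
Modulo $2$, $\sum(j-i)\equiv\sum(i+j)=\sum_{k=1}^{p-1}k=pm\equiv m$. Thus $\ell(A)\equiv m+\tfrac{m(m+1)}2$, and since $\tfrac{p^2-1}8=\tfrac{m(m+1)}2$,
\[
 (-1)^{\ell(A)}=(-1)^m\left(\frac2p\right)\quad\text{for every starter }A .
\]
This gives $[y_1\cdots y_m]\Pf(B_p)=(-1)^m\bigl(\frac2p\bigr)T_p$.

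\emph{Right side.} The multinomial theorem gives
$[y_1\cdots y_m]\bigl(2\sum d\,y_d\bigr)^m=2^m\,m!\,\prod_{d=1}^m d=2^m(m!)^2$. Euler's criterion gives $2^m\equiv\bigl(\frac2p\bigr)$. Wilson's theorem, written as $(p-1)!=\prod_{d=1}^m d(p-d)\equiv(-1)^m(m!)^2$, gives $(m!)^2\equiv(-1)^{m+1}$. So the coefficient of $y_1\cdots y_m$ on the right side of (5.3) is $\bigl(\frac2p\bigr)^2(-1)^{m+1}=(-1)^{m+1}$ in $\F_p$.

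\emph{Conclusion.} Equating the two coefficients gives $(-1)^m\bigl(\frac2p\bigr)T_p\equiv(-1)^{m+1}$, that is, $T_p\equiv-\bigl(\frac2p\bigr)\pmod p$. Squaring yields $T_p^2\equiv1$. The only delicate point is the sign bookkeeping for long edges, namely the constancy of $(-1)^{\ell(A)}$ and its identification with $(-1)^m\bigl(\frac2p\bigr)$. I would sanity-check it at $p=5$ with the starter $\{\{1,4\},\{2,3\}\}$: here $\ell=1$, $m=2$, and $\bigl(\frac25\bigr)=-1$, which is consistent.
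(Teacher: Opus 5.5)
Your proof is correct and takes the same approach as the paper: compare the coefficient of $y_1\cdots y_m$ on both sides of (5.3), show the long-edge sign is constant over starters (your $(-1)^m\bigl(\frac2p\bigr)$ equals the paper's $(-1)^{m(m-1)/2}$), and evaluate the right side with Euler's criterion and Wilson's theorem. The only difference is cosmetic: you fold the Legendre symbol into the long-edge sign before comparing coefficients, whereas the paper converts $(-1)^{m(m+1)/2}$ into $\bigl(\frac2p\bigr)$ at the end.
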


\begin{proof}
In the Pfaffian expansion of $B_p$, the coefficient of the squarefree
monomial $y_1\cdots y_m$ is a signed sum over the matchings of
$\{1,\ldots,p-1\}$ using each difference class exactly once, that is,
over starters.  An edge $\{a,b\}$ with $1\le a<b\le p-1$ carries the
extra minus sign in (5.2) exactly when $b-a>m$; call such an edge long,
and let $L(A)$ be the number of long edges of the starter $A$.  The
parity of $L(A)$ does not depend on $A$: the ordinary differences of
the matched pairs sum modulo $2$ to
\[
 \sum_{\text{edges}}(b-a)\equiv\sum_{x=1}^{2m}x\equiv m\pmod{2},
\]
while each short edge contributes its cyclic difference $d$ and each
long edge contributes $p-d\equiv d+1\pmod{2}$.  Since the cyclic
differences of a starter are exactly $1,\ldots,m$,
\[
 L(A)\equiv m-\sum_{d=1}^m d
       \equiv\frac{m(m-1)}2\pmod{2}.                         \tag{5.8}
\]
Thus $[y_1\cdots y_m]\Pf(B_p)=(-1)^{m(m-1)/2}\,T_p$.

Formula (5.3), on the other hand, gives this coefficient as
\[
 \left(\frac{2}{p}\right)m!\prod_{d=1}^m(2d)
 =\left(\frac{2}{p}\right)2^m(m!)^2.                       \tag{5.9}
\]
By Euler's criterion, the product of the first two factors on the
right of (5.9) is $1$.  Wilson's theorem gives
$(m!)^2\equiv(-1)^{m+1}\pmod p$, and comparison with (5.8) therefore
yields
\[
 T_p\equiv(-1)^{m(m+1)/2+1}\pmod p.
\]
Finally,
$\left(\frac{2}{p}\right)=(-1)^{(p^2-1)/8}
=(-1)^{m(m+1)/2}$, proving (5.7).
\end{proof}

Combining Lemma~\ref{lem:diag-sign} with
Lemma~\ref{lem:starter-sum} yields
\[
                         F_{\rm diag}\equiv p\pmod{p^2}. \tag{5.10}
\]

\begin{proof}[Proof of Theorem~\ref{thm:intro-2p}]
Insert (5.10) into the Burnside congruence (4.3):
\[
 S_{2p}\equiv-(p-1)^2F_{\rm diag}
          \equiv-(p-1)^2p
          \equiv-p\pmod{p^2}.
\]
This residue is nonzero, so $S_{2p}$ is a nonzero integer, and
Proposition~\ref{prop:coefficient-bridge} gives
\[
 \chi'_{\mathrm P}(K_{2p})=\chi'_{\ell}(K_{2p})=2p-1.
\]
\end{proof}

\subsection{The three prime-indexed orders}

The starter residue also recovers the previously known prime-degree
family and places the three prime-indexed orders in one calculation.

\begin{proposition}[the prime-degree family from starters]
\label{prop:pplus1-starter}
For every odd prime $p$,
\[
 S_{p+1}\equiv(-1)^{(p-1)/2}T_p
       \equiv-\left(\frac{-2}{p}\right)\pmod p.            \tag{5.11}
\]
Consequently,
\[
 \chi'_{\mathrm P}(K_{p+1})
   =\chi'_{\ell}(K_{p+1})=p.
\]
\end{proposition}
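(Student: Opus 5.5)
Since $p!$ is divisible by $p$, the ordered coefficient $\Theta_{p+1}=p!\,S_{p+1}$ carries no information modulo $p$. I would therefore copy the strategy of Section~4 and use a translation action rather than a coefficient calculation.

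Label the vertices of $K_{p+1}$ by $\F_p\cup\{\infty\}$, putting $\infty$ first and $0,1,\ldots,p-1$ afterwards. Let $\Gamma=\F_p$ act by $x\mapsto x+t$, fixing $\infty$. By Lemma~\ref{lem:sign-invariance}, $\varepsilon$ is constant on $\Gamma$-orbits. Lemma~\ref{lem:weighted-burnside} then gives
\[
 S_{p+1}+(p-1)F\equiv0\pmod p,
\]
where $F$ is the signed number of factorizations fixed by a nontrivial translation. All nontrivial translations are conjugate under the rescalings $x\mapsto ax$, which fix $\infty$, so by Lemma~\ref{lem:sign-invariance} they all have the same signed trace $F$. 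It follows that $S_{p+1}\equiv F\pmod p$.

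\emph{Classification of the fixed factorizations.} Let $g=(x\mapsto x+1)$. An edge $\{x,y\}$ of $\F_p$ has a $g$-orbit that covers every vertex twice, so it is not a matching. The orbit of $\{\infty,x\}$ is a star. Hence no perfect matching is $g$-invariant. There are $p$ factors, so the action on factors has all orbits of size $p$, and the factors form a single free orbit. Normalize at the factor containing $\{\infty,0\}$. Its remaining edges form a perfect matching $A$ of $\F_p\setminus\{0\}$. Edge-disjointness of the translates is exactly the condition that $A$ is a starter. This gives a bijection between starters and fixed factorizations, in parallel with Lemma~\ref{lem:diag-classification}.

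\emph{Sign.} The factor $A+t$ consists of the arc $\{\infty,t\}$ together with the near-perfect matching $A+t$ of $\F_p$ missing $t$. The arc from $\infty$, which is placed first, crosses exactly the chords straddling $t$. By the parity argument of Lemma~\ref{lem:diag-sign}, there are $\equiv t$ of these modulo $2$. Combined with rotation invariance (4.7), the factor has sign $(-1)^t\varepsilon(A)$. Multiplying over $t$ gives
\[
 (-1)^{p(p-1)/2}\varepsilon(A)^p=(-1)^{m}\varepsilon(A).
\]
Hence $F=(-1)^{(p-1)/2}T_p$. Lemma~\ref{lem:starter-sum} then gives
\[
 S_{p+1}\equiv-(-1)^{(p-1)/2}\left(\frac2p\right)=-\left(\frac{-2}p\right)\pmod p,
\]
which is nonzero, and Proposition~\ref{prop:coefficient-bridge} finishes the proof.

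The main care point is the sign bookkeeping: the crossing parity of $\{\infty,t\}$ must be checked against the linear order with $\infty$ placed first. Choosing $\infty$ last instead changes only the convention, because Lemma~\ref{lem:sign-invariance} shows the total sign is order-independent.
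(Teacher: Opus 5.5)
Your proposal is correct and follows essentially the same route as the paper. Both arguments apply signed Burnside to the translation group fixing $\infty$, identify the fixed factorizations with starters via the factor containing $\{\infty,0\}$, and use the straddling-chord parity to get $F=(-1)^mT_p$. Placing $\infty$ first rather than last, as the paper does, is immaterial, since in either order the arc $\{\infty,t\}$ crosses exactly the chords of $A+t$ that straddle $t$.
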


\begin{proof}
Put $m=(p-1)/2$.
Write the vertices as $\F_p\cup\{\infty\}$, in that order, and let
$\F_p$ translate its own vertices while fixing $\infty$.  The signed
Burnside congruence and conjugacy of the nonzero translations give
\[
                         S_{p+1}+(p-1)F_+\equiv0\pmod p,   \tag{5.12}
\]
where $F_+$ is the signed trace of $x\mapsto x+1$.

No perfect matching is fixed by this translation: the partner of
$\infty$ would have to be another fixed vertex.  Hence a fixed
one-factorization, which has $p$ factors, consists of one free orbit of
factors.  In that orbit there is a unique factor containing
$\{\infty,0\}$; it has the form
\[
                         \{\infty,0\}\cup A,
\]
where $A$ is a starter on $\F_p\setminus\{0\}$.  Conversely, the
translates of this matching form a one-factorization precisely when
$A$ is a starter.  Thus fixed one-factorizations are in bijection with
starters.

For $0\le t<p$, the factor
$\{\infty,t\}\cup(A+t)$ has sign $(-1)^t\varepsilon(A)$.  Indeed,
(4.7) gives $\varepsilon(A+t)=\varepsilon(A)$, and the arc
$\{t,\infty\}$ crosses a number of chords of $A+t$ congruent to $t$
modulo $2$.  Multiplication over the $p$ translates gives
\[
 \prod_{t=0}^{p-1}\varepsilon\bigl(\{\infty,t\}\cup(A+t)\bigr)
       =(-1)^{p(p-1)/2}\varepsilon(A)^p
       =(-1)^m\varepsilon(A).
\]
Therefore $F_+=(-1)^mT_p$.  Equation (5.12) says
$S_{p+1}\equiv F_+\pmod p$, and Lemma~\ref{lem:starter-sum} gives
\[
 (-1)^mT_p
 =-(-1)^m\left(\frac{2}{p}\right)
 =-\left(\frac{-2}{p}\right).
\]
The residue is nonzero, so Proposition~\ref{prop:coefficient-bridge}
gives the paintability statement.
\end{proof}

\begin{corollary}[three residues, one starter sum]
\label{cor:three-residues}
For every odd prime $p$, put $m=(p-1)/2$.  Then
\begin{align*}
 S_{p-1}&\equiv(-1)^{m+1}T_p
          \equiv \left(\frac{-2}{p}\right) &&\pmod p,\\
 S_{p+1}&\equiv(-1)^mT_p
          \equiv-\left(\frac{-2}{p}\right) &&\pmod p,\\
 S_{2p}&\equiv-pT_p^2\equiv-p &&\pmod {p^2}.
\end{align*}
In particular,
\[
 S_{p-1}+S_{p+1}\equiv0\pmod p,
 \qquad
 S_{p-1}S_{p+1}\equiv\frac{S_{2p}}p\equiv-1\pmod p.       \tag{5.13}
\]
\end{corollary}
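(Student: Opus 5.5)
The corollary only assembles residues that have already been computed, so the plan is to collect them and reduce everything to Lemma~\ref{lem:starter-sum} together with Euler's criterion. The main work is keeping the signs straight.

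\emph{First line.} I take $S_{p-1}\equiv(-2)^m=\left(\frac{-2}{p}\right)\pmod p$ from the proof of Theorem~\ref{thm:intro-pminus1}. To rewrite this in terms of $T_p$, I use Lemma~\ref{lem:starter-sum}: $T_p\equiv-\left(\frac2p\right)$, hence
\[
(-1)^{m+1}T_p\equiv(-1)^m\left(\frac2p\right)=\left(\frac{-1}{p}\right)\left(\frac2p\right)=\left(\frac{-2}{p}\right).
\]
The middle equality is Euler's criterion, $\left(\frac{-1}{p}\right)=(-1)^m$, and the last is multiplicativity of the Legendre symbol.

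\emph{Second line.} This is exactly (5.11) of Proposition~\ref{prop:pplus1-starter}, which gives $S_{p+1}\equiv(-1)^mT_p\equiv-\left(\frac{-2}{p}\right)$.

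\emph{Third line.} Burnside's congruence (4.3) gives $S_{2p}\equiv-(p-1)^2F_{\rm diag}\pmod{p^2}$, and Lemma~\ref{lem:diag-sign} gives $F_{\rm diag}=pT_p^2$ exactly. Because of the factor $p$, only $(p-1)^2T_p^2$ modulo $p$ matters. Since $(p-1)^2\equiv1\pmod p$, this yields $S_{2p}\equiv-pT_p^2\pmod{p^2}$. Finally $T_p^2\equiv1\pmod p$ by Lemma~\ref{lem:starter-sum}, so $S_{2p}\equiv-p\pmod{p^2}$.

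For (5.13), the sum vanishes because the first two residues are negatives of each other. For the product,
\[
S_{p-1}S_{p+1}\equiv-\left(\frac{-2}{p}\right)^2=-1\pmod p.
\]
Also $S_{2p}/p$ is an integer congruent to $-T_p^2\equiv-1\pmod p$, since $S_{2p}\equiv-p\pmod{p^2}$ means $p\mid S_{2p}$ with $S_{2p}/p\equiv-1\pmod p$.

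I expect no real obstacle beyond sign bookkeeping. The one point to state carefully is the passage from $(-1)^{m+1}T_p$ to $\left(\frac{-2}{p}\right)$. This must go through $(-1)^m=\left(\frac{-1}{p}\right)$, and it is consistent with the congruence $(-2)^m$ obtained independently in Section~3.
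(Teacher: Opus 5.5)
Your proposal is correct and follows the paper's proof. The first line comes from Theorem~\ref{thm:intro-pminus1} together with Lemma~\ref{lem:starter-sum}, using $(-1)^m=\left(\frac{-1}{p}\right)$; the second is Proposition~\ref{prop:pplus1-starter}; and the third comes from (4.3) with $F_{\rm diag}=pT_p^2$ and $(p-1)^2\equiv1\pmod p$, with your sign bookkeeping simply writing out what the paper says ``follow at once.''
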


\begin{proof}
The first line follows from Theorem~\ref{thm:intro-pminus1}, since
$(p-2)!\equiv1\pmod p$, together with
Lemma~\ref{lem:starter-sum}.  The second is
Proposition~\ref{prop:pplus1-starter}.  For the third, (4.3) and
Lemma~\ref{lem:diag-sign} give
$S_{2p}\equiv-pT_p^2\pmod {p^2}$.  The final relations follow at once.
\end{proof}

\section{Deleting factors}

Identity (6.1) relates the signed counts before and after one factor is
deleted.  The second result in this section evaluates the two-layer
count after cyclic cross matchings are deleted.

\subsection{Deleting one factor}

\begin{proposition}[deleting one factor]\label{prop:matching-deletion}
Let $G$ be a simple $k$-regular one-factorable graph on an ordered vertex set,
and let $\mathcal M(G)$ be its set of perfect matchings.  Then
\[
 \sum_{M\in\mathcal M(G)}\varepsilon(M)S(G-M)=kS(G).       \tag{6.1}
\]
In particular, if $n\ge4$ is even and $I$ is any perfect matching of
$K_n$, then
\[
 S(K_n-I)=\varepsilon(I)\frac{(n-1)S_n}{(n-1)!!}
          =\varepsilon(I)\frac{S_n}{(n-3)!!}.             \tag{6.2}
\]
Consequently, $(n-3)!!$ divides $S_n$, and
\[
 S_n\ne0
 \quad\Longrightarrow\quad
 \chi'_{\mathrm P}(K_n-I)=\chi'_{\ell}(K_n-I)=n-2.       \tag{6.3}
\]
\end{proposition}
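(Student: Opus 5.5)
Identity (6.1) follows from double counting pairs $(\mathcal F,M)$ in which $M$ is a distinguished factor of the one-factorization $\mathcal F$ of $G$. Each unordered factorization $\mathcal F$ has exactly $k$ factors, so it appears in $k$ such pairs. Summing $\varepsilon(\mathcal F)$ over all pairs therefore gives $kS(G)$.

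Now group the pairs by $M$ instead. Fixing a perfect matching $M$ of $G$, the pairs $(\mathcal F,M)$ correspond bijectively to unordered one-factorizations $\mathcal F'$ of the $(k-1)$-regular graph $G-M$, via $\mathcal F=\mathcal F'\cup\{M\}$. Both graphs share the ordered vertex set, and the sign is a product over factors, so $\varepsilon(\mathcal F)=\varepsilon(M)\varepsilon(\mathcal F')$. If $G-M$ has no one-factorization, both sides vanish by the convention $S=0$. Summing over $M\in\mathcal M(G)$ gives (6.1). This step is routine.

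For (6.2), take $G=K_n$, so $k=n-1$. We need $\varepsilon(M)S(K_n-M)$ to be independent of $M$. Given two perfect matchings $I$ and $M$, choose a vertex permutation $\pi$ with $\pi I=M$. Then $\pi$ is an isomorphism from $K_n-I$ to $K_n-M$ and carries one-factorizations to one-factorizations. We need the relabeling law in the form of Lemma~\ref{lem:sign-invariance}, but for an isomorphism between two graphs rather than an automorphism. Its proof applies verbatim, since it uses only the single-matching transformation law and the fact that the factors partition the edge set. This gives
\[
\varepsilon(\pi\mathcal F)=\det(P_\pi)^{n-2}(-1)^{\iota(\pi)}\varepsilon(\mathcal F),
\]
where $\iota(\pi)$ counts the inversions of $\pi$ on $E(K_n-I)$.

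Since $n$ is even, $\det(P_\pi)^{n-2}=1$. The inversions of $\pi$ on all of $E(K_n)$ have parity $\det P_\pi$, so $(-1)^{\iota(\pi)}=\det(P_\pi)(-1)^{f(\pi,I)}$. On the other hand, the single-matching law gives $\varepsilon(M)=\varepsilon(\pi I)=\det(P_\pi)(-1)^{f(\pi,I)}\varepsilon(I)$. Combining the two,
\[
S(K_n-M)=\varepsilon(M)\varepsilon(I)S(K_n-I),
\]
so $\varepsilon(M)S(K_n-M)=\varepsilon(I)S(K_n-I)$ for every $M$. Keeping this sign bookkeeping straight is the main obstacle, and I would check it carefully against the inversion identity used in Lemma~\ref{lem:sign-invariance}.

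Every summand of (6.1) is therefore equal. There are $(n-1)!!$ perfect matchings, so $(n-1)!!\,\varepsilon(I)S(K_n-I)=(n-1)S_n$. This is (6.2), using $(n-1)!!=(n-1)(n-3)!!$ and $\varepsilon(I)^2=1$. Since $S(K_n-I)$ is an integer, $(n-3)!!$ divides $S_n$.

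Finally, if $S_n\ne0$ then $S(K_n-I)\ne0$. The graph $K_n-I$ is simple, $(n-2)$-regular, and of class $1$, since the remaining $n-2$ factors of any one-factorization of $K_n$ that contains $I$ form an $(n-2)$-edge-coloring. Such a factorization exists because relabeling carries any factor of any one-factorization of $K_n$ to $I$. Proposition~\ref{prop:coefficient-bridge} then gives (6.3). The hypothesis $n\ge4$ ensures $k=n-2\ge1$.
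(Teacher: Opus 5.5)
Your proof is correct and follows the paper's argument: the same double count of (factorization, distinguished factor) pairs gives (6.1), and relabeling invariance makes every summand equal when $G=K_n$. The one difference is a shortcut: the paper does not extend Lemma~\ref{lem:sign-invariance} to isomorphisms $K_n-I\to K_n-M$, but observes that $\varepsilon(I)S(K_n-I)$ is the signed count of one-factorizations of $K_n$ containing $I$, so the complete-graph case $\varepsilon(\pi\mathcal F)=\varepsilon(\mathcal F)$ applies directly, whereas your inversion-parity bookkeeping reaches the same identity by a longer route.
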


\begin{proof}
Expand the left side of (6.1).  A term consists of a
one-factorization $\mathcal F$ of $G$ together with one distinguished
factor $M\in\mathcal F$; its sign is
\[
 \varepsilon(M)\varepsilon(\mathcal F\setminus\{M\})
 =\varepsilon(\mathcal F).
\]
Every factorization has $k$ choices for $M$, proving (6.1).

Now take $G=K_n$.  Adjoining $I$ to a factorization of $K_n-I$ gives
\[
 \varepsilon(I)S(K_n-I)
   =\sum_{\substack{\mathcal F\text{ a one-factorization of }K_n\\
                    I\in\mathcal F}}\varepsilon(\mathcal F).
\]
By Lemma~\ref{lem:sign-invariance}, this quantity is independent of
$I$, since any two perfect matchings are related by a relabeling.
There are $(n-1)!!$
perfect matchings, so (6.1) gives
\[
 (n-1)!!\,\varepsilon(I)S(K_n-I)=(n-1)S_n.
\]
This is (6.2), since $\varepsilon(I)^2=1$.  The divisibility and
nonvanishing assertions follow immediately.  Finally, $K_n-I$ is
$(n-2)$-regular and one-factorable; apply
Proposition~\ref{prop:coefficient-bridge}.
\end{proof}

\subsection{Cyclic cross-factor deletion}

The same diagonal trace evaluates the signed count when any nonempty
proper collection of its invariant cross matchings is retained.

Retain the two ordered layers $V_0,V_1$ and the matchings $M_d$
introduced in Section~4.  For a nonempty set $D\subseteq\F_p$, put
\[
 G_D=(K_p\sqcup K_p)\cup\bigcup_{d\in D}M_d,
 \qquad \delta=|D|,                                      \tag{6.4}
\]
and let $\tau(x,i)=(x+1,i)$ be the diagonal translation.  Write
$T_D$ for the signed sum of the one-factorizations of $G_D$ fixed by
$\tau$.

\begin{theorem}[two-layer trace]\label{thm:two-layer-trace}
Let $p$ be an odd prime, put $m=(p-1)/2$, and let
$\varnothing\ne D\subseteq\F_p$.
\[
             T_D=(-1)^{m(\delta-1)}\delta T_p^2.          \tag{6.5}
\]
If $1\le\delta\le p-1$, then
\[
 S(G_D)\equiv(-1)^{m(\delta-1)}\delta\pmod p,            \tag{6.6}
\]
and consequently
\[
 \chi'_{\mathrm P}(G_D)=\chi'_{\ell}(G_D)=p-1+\delta.   \tag{6.7}
\]
\end{theorem}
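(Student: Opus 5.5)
The plan is to run the argument of Section~4 again with the cyclic group $\langle\tau\rangle\cong\F_p$ in place of $\Gamma$. The reason is that $G_D$ is not invariant under the whole of $\Gamma$: a translation $(a,b)$ sends $M_d$ to $M_{d+b-a}$. It is, however, invariant under every diagonal translation $(t,t)$.

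\emph{Sign invariance.} First I would check that $\varepsilon$ is constant on $\langle\tau\rangle$-orbits of one-factorizations of $G_D$, using Lemma~\ref{lem:sign-invariance} with $k=p-1+\delta$.
\begin{itemize}[label=--]
\item The permutation $\tau^j$ is a product of two $p$-cycles, so $\det(P_{\tau^j})=1$.
\item For $\iota_{G_D}(\tau^j)$, cross edges are never inverted, because $V_0$ stays entirely before $V_1$.
\item Inside one layer, $x\mapsto x+j$ inverts exactly the edges $\{x,y\}$ with $x<p-j\le y$. There are $j(p-j)$ of them, which is even since $p$ is odd. Both layers together therefore contribute an even number.
\end{itemize}
Hence $\varepsilon(\tau^j\mathcal F)=\varepsilon(\mathcal F)$. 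Lemma~\ref{lem:weighted-burnside} then gives
\[
 S(G_D)+\sum_{j=1}^{p-1}W(\tau^j)\equiv0\pmod p .
\]

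\emph{Fixed points of $\tau^j$.} Next I would classify the factorizations fixed by $g=\tau^j$ for any $j\ne0$, following the proof of Lemma~\ref{lem:diag-classification}.
\begin{itemize}[label=--]
\item As there, $g$ fixes no edge. A $g$-invariant perfect matching is therefore a single edge orbit, namely some $M_d$ with $d\in D$.
\item The factorization has $p-1+\delta$ factors, so the number $f$ of invariant factors satisfies $f\equiv\delta-1\pmod p$ and $0\le f\le\delta\le p$. This forces $f=\delta-1$: exactly one $M_{d_0}$ with $d_0\in D$ is omitted.
\item The remaining $p$ factors form one free orbit, whose union is $K_p\sqcup K_p\cup M_{d_0}$. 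Exactly as before, each of them consists of one edge of $M_{d_0}$ together with near-perfect matchings of the two layers.
\item Normalizing at the factor through $\{(0,0),(d_0,1)\}$ gives an ordered pair $(A,B)$. Edge-disjointness of the orbit holds exactly when $A$ and $B$ are starters; this condition does not depend on the generator $j$, because the orbit under $(j,j)$ is the full translation orbit.
\end{itemize}
So the fixed factorizations correspond bijectively to triples $(d_0,A,B)$ with $d_0\in D$.

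\emph{Signs.} For the signs I would reuse the computations in Lemma~\ref{lem:diag-sign} verbatim.
\begin{itemize}[label=--]
\item Each $M_d$ has sign $(-1)^m$, so the $\delta-1$ retained fixed factors contribute $(-1)^{m(\delta-1)}$. In Section~4 this factor was $(-1)^{m(p-1)}=1$; here it survives.
\item The free orbit consists of all $p$ translates, in some order, so it contributes $\varepsilon(A)\varepsilon(B)$ by the same parity count $\sum_t t+\sum_t b_t\equiv0\pmod 2$.
\end{itemize}
Summing over the $\delta$ choices of $d_0$ and over the pairs $(A,B)$ gives $W(\tau^j)=(-1)^{m(\delta-1)}\delta T_p^2$ for every $j\ne0$. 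Taking $j=1$ yields (6.5).

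\emph{Conclusion and main obstacle.} Substituting into the congruence gives
\[
 S(G_D)\equiv-(p-1)T_D\equiv T_D\equiv(-1)^{m(\delta-1)}\delta\pmod p,
\]
using $T_p^2\equiv1$ from Lemma~\ref{lem:starter-sum}. This is (6.6), and the residue is nonzero when $1\le\delta\le p-1$. Since $G_D$ is $(p-1+\delta)$-regular and one-factorable (the classification exhibits factorizations), Proposition~\ref{prop:coefficient-bridge} gives (6.7). The step needing the most care is the counting $f=\delta-1$ in the boundary case $\delta=p$, and the fact that every power $\tau^j$ has the same trace without a rescaling conjugation. That conjugation would move $D$, so I would instead argue directly for general $j$, as above.
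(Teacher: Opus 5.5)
Your proof is correct and follows the paper's argument step for step: sign invariance via Lemma~\ref{lem:sign-invariance}, the forced count $f=\delta-1$ of fixed factors, reduction of the free orbit to a pair of starters as in Lemma~\ref{lem:diag-classification}, the signs from Lemma~\ref{lem:diag-sign}, and weighted Burnside for $\langle\tau\rangle$. The only difference is that you treat every power $\tau^j$ directly, whereas the paper checks $\tau$ alone and notes that all nonidentity powers have the same fixed set; also, your class~$1$ claim needs a starter to exist, which follows from $T_p^2\equiv1\pmod p$ or simply from $S(G_D)\neq0$.
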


\begin{proof}
The automorphism $\tau$ is a product of two $p$-cycles, so
$\det(P_\tau)=1$.  It reverses $p-1$ internal edges in each layer and
no cross edge in the chosen vertex order.  Hence
$\iota_{G_D}(\tau)=2(p-1)$, and Lemma~\ref{lem:sign-invariance} shows
that $\tau$ preserves the sign of every one-factorization.

The $\tau$-invariant perfect matchings of $G_D$ are exactly the
$M_d$ with $d\in D$.  Indeed, an invariant matching is a single
$p$-edge orbit; an internal edge orbit is $2$-regular on one odd
layer, whereas a cross-edge orbit is some $M_d$.

Suppose that a $\tau$-fixed one-factorization has $a$ fixed factors.
Its remaining factors occur in orbits of length $p$, and its degree is
$p-1+\delta$.  Thus
\[
                       a\equiv\delta-1\pmod p.
\]
Since $0\le a\le\delta\le p$, this forces $a=\delta-1$.
Exactly one available cross matching, say $M_{d_0}$, is therefore
omitted from the fixed factors, and the remaining $p$ factors form one
free orbit.  Their union is
\[
                       K_p\sqcup K_p\cup M_{d_0}.
\]
The free orbit has the same union and the same diagonal action as in
Lemma~\ref{lem:diag-classification}.  After marking its unique factor
containing $\{(0,0),(d_0,1)\}$, that lemma identifies the two internal
parts with an ordered pair $(A,B)$ of starters.

Each of the $\delta-1$ fixed matchings has sign $(-1)^m$, and the free
orbit has sign $\varepsilon(A)\varepsilon(B)$ by the calculation in
Lemma~\ref{lem:diag-sign}.  Summing over the $\delta$ choices of
$d_0$ and over the ordered pairs of starters gives (6.5).

For $\delta\le p-1$, weighted Burnside for $\langle\tau\rangle$ gives
\[
 S(G_D)+(p-1)T_D\equiv0\pmod p,
\]
because every nonidentity power of $\tau$ has the same fixed set.
Thus $S(G_D)\equiv T_D\pmod p$.  Lemma~\ref{lem:starter-sum} gives
$T_p^2\equiv1\pmod p$, proving (6.6).  The displayed residue is
nonzero.  The same lemma implies $T_p\ne0$, so a starter exists; the
fixed factorizations constructed above therefore show that $G_D$ is
class $1$.  Proposition~\ref{prop:coefficient-bridge} proves (6.7).
\end{proof}

At $\delta=p$, (6.5) becomes $F_{\rm diag}=pT_p^2$, so the calculation
for $K_{2p}$ must be carried out modulo $p^2$.  When
$1\le\delta<p$, the trace is nonzero modulo $p$.

The deleted form makes the coloring consequence especially transparent.

\begin{corollary}[cyclic cross-factor deletions]
\label{cor:cyclic-deletions}
Let $p$ be an odd prime, put $m=(p-1)/2$, let $E\subset\F_p$ with
$1\le r=|E|\le p-1$, and put
\[
                         H_E=K_{2p}-\bigcup_{e\in E}M_e.
\]
Then
\[
 S(H_E)\equiv(-1)^{mr+1}r\pmod p,                        \tag{6.8}
\]
and
\[
 \chi'_{\mathrm P}(H_E)=\chi'_{\ell}(H_E)=2p-1-r.       \tag{6.9}
\]
\end{corollary}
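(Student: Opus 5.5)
The corollary is Theorem~\ref{thm:two-layer-trace} read through complementation. Set $D=\F_p\setminus E$. Since $K_{2p}$ is the union of the two layer cliques and the $p$ cross matchings $M_0,\ldots,M_{p-1}$, we have
\[
H_E=G_D,\qquad \delta=|D|=p-r,
\]
and the hypothesis $1\le r\le p-1$ is exactly $1\le\delta\le p-1$. The vertex order is the same two-layer order as in Section~4, so the signs agree and $S(H_E)=S(G_D)$.

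Equation (6.6) then gives
\[
S(H_E)\equiv(-1)^{m(p-r-1)}(p-r)\pmod p .
\]
Next simplify the residue. We have $p-r\equiv-r\pmod p$. For the exponent, $p-1=2m$, so
\[
m(p-r-1)=2m^2-mr\equiv mr\pmod 2 .
\]
Hence $S(H_E)\equiv(-1)^{mr}(-r)=(-1)^{mr+1}r$, which is (6.8).

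For (6.9), equation (6.7) gives $\chi'_{\mathrm P}(G_D)=\chi'_\ell(G_D)=p-1+\delta=2p-1-r$. Alternatively, note that the residue in (6.8) is nonzero because $1\le r\le p-1$. Then Proposition~\ref{prop:coefficient-bridge} applies to the $(2p-1-r)$-regular graph $H_E$, which is class~$1$ by the theorem.

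There is no real obstacle here. The only point to check is the sign bookkeeping: confirm that the exponent $m(\delta-1)$ reduces to $mr$ modulo~$2$, and that the negation of $\delta$ modulo $p$ supplies the extra factor $-1$.
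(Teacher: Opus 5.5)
Your proposal is correct and follows the paper's proof essentially verbatim: both apply Theorem~\ref{thm:two-layer-trace} to $D=\F_p\setminus E$ with $\delta=p-r$, reduce the sign and residue via $m(p-r-1)\equiv mr\pmod 2$ and $p-r\equiv -r\pmod p$, and read (6.9) directly off (6.7). Your explicit checks that $H_E=G_D$ and that $1\le r\le p-1$ is equivalent to $1\le\delta\le p-1$ are the right bookkeeping and are left implicit in the paper.
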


\begin{proof}
Apply Theorem~\ref{thm:two-layer-trace} to
$D=\F_p\setminus E$, for which $\delta=p-r$.  Since
\[
 m(p-r-1)\equiv mr\pmod2,
 \qquad p-r\equiv-r\pmod p,
\]
equation (6.6) becomes (6.8), and (6.9) follows from (6.7).
\end{proof}

At the endpoints, $r=p-1$ gives
$H_E\cong K_p\square K_2$, while $r=1$ gives the cocktail-party graph
$K_{2p}-I$.  For $r=2$, the union of the two deleted matchings is a Hamiltonian
cycle, so the same theorem applies to $K_{2p}-C_{2p}$.  The case
$r=1$ supplies the cyclic representative; Proposition~\ref{prop:matching-deletion}
gives the exact quotient formula for every perfect matching $I$.

\subsection{Coloring consequences}

Applying the deletion identity (6.2) to the three congruences in
Corollary~\ref{cor:three-residues} gives the following consequences.

\begin{corollary}[prime-indexed one-factor deletions]
\label{cor:prime-deletions}
Let $I$ be a perfect matching.  For every odd prime $p$,
\[
 \chi'_{\mathrm P}(K_{p+1}-I)
 =\chi'_{\ell}(K_{p+1}-I)=p-1
\]
and
\[
 \chi'_{\mathrm P}(K_{2p}-I)
 =\chi'_{\ell}(K_{2p}-I)=2p-2.
\]
For every odd prime $p\ge5$,
\[
 \chi'_{\mathrm P}(K_{p-1}-I)
 =\chi'_{\ell}(K_{p-1}-I)=p-3.
\]
Moreover,
\[
             \varepsilon(I)S(K_{2p}-I)\equiv-1\pmod p.     \tag{6.10}
\]
\end{corollary}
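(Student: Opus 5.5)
The plan is to apply Proposition~\ref{prop:matching-deletion}, formula (6.2), to each of the three complete graphs in turn. In each case we need the numerator $S_n$ to be a nonzero integer, and the residues in Corollary~\ref{cor:three-residues} supply this. Then (6.3) gives the paintability and choosability equalities. The graph $K_n-I$ is $(n-2)$-regular and one-factorable, so $\chi'(K_n-I)=n-2$ and the equalities have the stated values.

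For $K_{p+1}-I$, take $n=p+1\ge4$. The second line of Corollary~\ref{cor:three-residues} gives $S_{p+1}\equiv-\left(\frac{-2}{p}\right)\not\equiv0\pmod p$, so $S_{p+1}\ne0$, and (6.3) yields $\chi'_{\mathrm P}=\chi'_\ell=p-1$. For $K_{p-1}-I$ we need $n=p-1\ge4$, which is why $p\ge5$ is required. Theorem~\ref{thm:intro-pminus1} gives $S_{p-1}\equiv\left(\frac{-2}{p}\right)\ne0$, and (6.3) yields the value $p-3$. For $K_{2p}-I$, take $n=2p\ge6$. Theorem~\ref{thm:intro-2p} gives $S_{2p}\equiv-p\pmod{p^2}$, which is nonzero, so (6.3) yields $2p-2$.

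The only step that needs care is the congruence (6.10). Formula (6.2) gives $\varepsilon(I)S(K_{2p}-I)=S_{2p}/(2p-3)!!$. The double factorial $(2p-3)!!=1\cdot3\cdots(2p-3)$ contains exactly one multiple of $p$, namely the factor $p$ itself; the next odd multiple, $3p$, exceeds $2p-3$. So write $(2p-3)!!=p\,R$ with
\[
 R=\prod_{\substack{1\le j\le 2p-3,\ j \text{ odd}\\ j\ne p}} j .
\]
Then $S_{2p}=pR\,\varepsilon(I)S(K_{2p}-I)$. Reducing $S_{2p}\equiv-p\pmod{p^2}$ and dividing by $p$ gives
\[
 R\,\varepsilon(I)S(K_{2p}-I)\equiv-1\pmod p .
\]

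It remains to show $R\equiv1\pmod p$. The odd numbers in $[1,p-2]$ are unchanged mod $p$. The odd numbers $p+2,\dots,2p-3$ reduce to the even residues $2,4,\dots,p-3$. So $R\equiv(p-2)!\pmod p$, and Wilson's theorem gives $(p-2)!\equiv1$. This proves (6.10).

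As a consistency check, the corresponding unit for $K_{p+1}$ and $K_{p-1}$ is not needed for the coloring claims, since only nonvanishing is used there.
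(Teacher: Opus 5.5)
Your proposal is correct and follows the paper's own proof essentially step for step. Nonvanishing of $S_{p+1}$, $S_{p-1}$, $S_{2p}$ from the prime residues feeds into (6.3), and (6.10) comes from writing $(2p-3)!!=pR$ with $R\equiv(p-2)!\equiv1\pmod p$ by Wilson's theorem, then dividing $S_{2p}\equiv-p\pmod{p^2}$ by $p$.
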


\begin{proof}
The three signed counts $S_{p-1}$, $S_{p+1}$, and $S_{2p}$ are
nonzero by Corollary~\ref{cor:three-residues}.  The coloring statements
therefore follow from Proposition~\ref{prop:matching-deletion}.  For
the last congruence, write
\[
 (2p-3)!!=p\,d.
\]
Modulo $p$, the factors before $p$ give the odd residues
$1,3,\ldots,p-2$, while those after $p$ reduce to the even residues
$2,4,\ldots,p-3$.  Hence $d\equiv(p-2)!\equiv1\pmod p$.  Dividing
$S_{2p}\equiv-p\pmod {p^2}$ by $p\,d$ in (6.2) proves (6.10).
\end{proof}

\section*{Conclusion}
\addcontentsline{toc}{section}{Conclusion}

The two new congruences come from different calculations, but their residues
are governed by the same signed starter sum
\[
                     T_p\equiv-\left(\frac2p\right)\pmod p.
\]
The same calculation also recovers the known prime-degree family and gives
\[
 \begin{aligned}
 S_{p-1}&\equiv \left(\frac{-2}{p}\right) &&\pmod p,\\
 S_{p+1}&\equiv-\left(\frac{-2}{p}\right) &&\pmod p,\\
 S_{2p}&\equiv-p &&\pmod {p^2}.
 \end{aligned}
\]
These nonzero residues prove $\chi'_{\mathrm P}=\chi'_{\ell}=\chi'$ at the
two new orders $p-1$ and $2p$.  They also give the one-factor-deletion
results at all three orders, while the same starter sum controls the proper
cyclic cross-factor deletions at order $2p$.  Thus one signed matching sum
accounts for all three prime-indexed orders and for their deletion
consequences.

The companion paper describes the boundary of the prime-block translation method
\cite{JafariSaturation}.  For an odd prime $p$, write the even integer $n$ as
$n=bp+h$, where $b\ge1$ and $0\le h<p$.  The companion paper lets $\F_p^b$
translate $b$ disjoint $p$-vertex blocks.  Except when
$(b,h)=(1,1)$ or $(2,0)$, every nonidentity signed fixed-point trace is
divisible by the full group order $p^b$.  Weighted Burnside then gives
$p^b\mid S_n$, rather than a nonzero residue.  Thus, by itself, the same
congruence argument yields no further coloring cases beyond the two
exceptional translation configurations; the order $p-1$ result comes
instead from the separate Pfaffian calculation of Section~3.

\clearpage
\phantomsection

\end{document}